\theoremstyle{plain}
\newtheorem{theorem}{Theorem}[section]
\newtheorem{lemma}[theorem]{Lemma}
\newtheorem{corollary}[theorem]{Corollary}
\newenvironment{customthm}[1]
  {\innercustomthm}
  {\endinnercustomthm}
\theoremstyle{definition}
\newtheorem{definition}[theorem]{Definition}
\newtheorem{example}[theorem]{Example}
\newtheorem{question}{Question}[subsection]
\newtheorem{conjecture}[theorem]{Conjecture}
\theoremstyle{remark}
\newcommand{\Z}{\mathbb{Z}}
\newcommand{\Ps}{\operatorname{PStab}}
\newcommand{\lk}{\operatorname{link}}
\title{Acylindrical Visual Splittings and The Tits Alternative For Artin Groups}
\date{}
\author{William D. Cohen}
\begin{document}
\begin{abstract}
    We give a necessary and sufficient condition on a visual splitting of an Artin group satisfying the conditions of two well known conjectures to be acylindrical, and demonstrate how this can be used to provide a large class of novel examples of Artin groups that satisfy the Tits alternative.
\end{abstract}
\maketitle
\section{Introduction}
Given a finite set $S$ and a graph $\Gamma_S$ whose vertices are labelled by $S$ whose every edge $\{u, v\}$ is labelled by an integer $m_{uv}\geq 2$, we define the \emph{Artin group} $A_S$ to be given by the presentation

 \[A_S:=\langle S\mid \{\underbrace{uvu\cdots uv}_{m_{uv}}= \underbrace{vuv\cdots vu}_{m_{uv}}  : \left\{u, v\right\}\in E(\Gamma)\}\rangle.\]
 
An Artin group is a natural generalisation of a braid group, with braid relations corresponding to Artin relations of length $3$. Artin groups may also be viewed as an variation on Coxeter groups, in that if we were to add to the above presentation the set of relations $\{s^2=1\mid s\in S\}$ we would obtain the Coxeter group on the same generating set and graph.

However, Artin groups tend to be significantly more mysterious than coxeter groups in general. For example, it is unknown in general when an Artin group has solvable word problem, when it has torsion or even when it has trivial centre. It is also open in general when an Artin group is acylindrically hyperbolic, a well studied and powerful generalisation of hyperbolicity that has been of much interest in recent years \cite{minasyanOsin13, Osin13,  Osin17}, and was the first such generalisation to encompass mapping class groups of hyperbolic surfaces (\cite{Bowditch08,MasurMinsky99}, see also \cite[Section~8]{Osin13}). 

The acylindrical hyperbolicity of Artin groups has been well studied, and is the subject of the following conjecture.

\begin{conjecture}
    Let $A_S$ be an Artin group, and $Z_S$ the centre of $A_S$. Then $A_S/Z_S$ is acylindrically hyperbolic.
\end{conjecture}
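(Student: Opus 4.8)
This is a long-standing open conjecture rather than a statement the paper establishes, so I outline the program by which I would attack it rather than a finished proof. \textbf{Reduction to the irreducible case.} First I would isolate the direct-product obstruction. If $\Gamma_S$ is a join $\Gamma_{S_1}*\Gamma_{S_2}$ all of whose crossing edges carry the label $2$, then $A_S\cong A_{S_1}\times A_{S_2}$ and $Z_S\cong Z_{S_1}\times Z_{S_2}$, so $A_S/Z_S\cong (A_{S_1}/Z_{S_1})\times (A_{S_2}/Z_{S_2})$. Since an acylindrically hyperbolic group can never be a direct product of two infinite groups, the conjecture is only tenable once restricted to \emph{irreducible} $A_S$, and I would read it in that form. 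Assuming $A_S$ irreducible and non-cyclic, I would induct on $|S|$.

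\textbf{Inductive step via visual splittings.} The engine of the induction is the graph-of-groups structure of $A_S$. If $\Gamma_S$ is not complete it has a proper separating full subgraph $\Gamma_0$, and then $A_S$ decomposes as an amalgam $A_{\Gamma_1}*_{A_{\Gamma_0}}A_{\Gamma_2}$ over the proper parabolic $A_{\Gamma_0}$, giving an action of $A_S$ on the associated Bass--Serre tree $T$. The key input is the characterization of \emph{when such a visual splitting is acylindrical} to which this paper is devoted: granting acylindricity of the action on $T$, I would apply the criterion of Minasyan and Osin \cite{minasyanOsin13}, that a group acting acylindrically and minimally on a tree is acylindrically hyperbolic unless it is virtually cyclic or fixes an end. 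Irreducibility and non-cyclicity should rule out these degenerate cases, and one must then check that descending to the central quotient preserves both acylindricity and non-elementarity; in the splittable case the relevant centres are expected to be trivial, so this last point is largely automatic. This would settle the conjecture whenever $\Gamma_S$ is non-complete.

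\textbf{Base case and the main obstacle.} The residual case, where the real difficulty lies, is that of a \emph{complete} defining graph, for these admit no visual splitting. This class splits in two. When $A_S$ is of spherical (finite) type, the centre $Z_S$ is infinite cyclic, generated by a power of the Garside element (Deligne; Brieskorn--Saito), and one must build a hyperbolic space by hand: for the braid groups this is the additional-length complex of Calvez--Wiest, and the task is to extend such Garside-theoretic constructions to every irreducible spherical type and verify that the $A_S/Z_S$-action is acylindrical and non-elementary. When $A_S$ is of non-spherical complete type (for example the affine $\tilde A_2$ case), the centre is conjecturally trivial and one needs acylindrical hyperbolicity of $A_S$ itself, which is known only in restricted regimes such as large or FC type. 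These atomic cases, together with the fact that determining $Z_S$ is itself open in general, are precisely the obstruction: the visual-splitting induction cleanly reduces the conjecture to complete-graph Artin groups but cannot resolve them, which is why the present paper derives its consequences only \emph{conditionally} on this conjecture.
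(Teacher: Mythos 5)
You are right that this statement is an open conjecture, not something the paper proves: the paper states it purely as motivation and all of its own results are conditional on (or adjacent to) it, so there is no paper proof to compare against. Your decision to outline a program rather than claim a proof is the correct reading, and your opening observation is genuinely valuable: as literally stated in the paper the conjecture fails for reducible Artin groups (e.g.\ the right-angled Artin group on a square is $F_2\times F_2$, has trivial centre, and is not acylindrically hyperbolic since it is a direct product of two infinite groups), so the intended hypothesis of irreducibility must be added.

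However, your inductive step contains a concrete gap: you propose to handle non-complete $\Gamma_S$ by ``granting acylindricity'' of the visual splitting and applying the Osin/Minasyan--Osin criterion, but acylindricity is exactly what can fail, and not just for want of a proof. The paper's own Example on the path $P_3$ with labels $2$ and $3$ exhibits an Artin group none of whose visual splittings is acylindrical, so your induction engine stalls on such groups. This is precisely why Charney--Martin--Morris-Wright do something different: assuming only the parabolic intersection conjecture, they extract acylindrical hyperbolicity from the action on the Bass--Serre tree of a visual splitting via WPD/loxodromic elements, without needing the action itself to be acylindrical; and why the present paper, when it does need acylindricity of the splitting, must impose the neighbourhood/odd-path condition of its main theorem, which is a strictly stronger hypothesis than non-completeness of $\Gamma_S$. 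Two smaller inaccuracies: your argument is not really an induction on $|S|$ (the tree criterion needs nothing about the vertex groups' central quotients), and the spherical-type base case you describe as open was in fact settled by Calvez--Wiest, who proved acylindrical hyperbolicity of the central quotient for all irreducible spherical-type Artin groups; the genuinely open atomic case is the non-spherical complete-graph one, as you say at the end.
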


Some progress has been made towards this conjecture. For example, Vaskou proved that all 2-dimensional Artin groups with at least three generators are acylindrically hyperbolic \cite{Vaskou22}. Further, Charney, Martin and Morris-Wright have strongly linked this conjecture to the parabolic intersection conjecture (see Definition~\ref{def:PIP}), proving that if the latter conjecture holds, then every Artin group with a \emph{visual splitting} is acylindrically hyperbolic \cite{CMMR25}, where a visual splitting is a natural action on a tree that comes from the presentation graph $\Gamma_S$ of an Artin group $A_\Gamma$ and that exists if and only if this graph is not complete.

We focus in this paper on a stregthening of the acylindrical hyperbolicity question, namely concerning acylindrical actions on trees. The definition of an acylindrical action on a tree was first formulated by Sela \cite{Sela97}, and later generalised by Weidmann \cite{Weidmann07} to the following. 
\begin{definition}\label{Def:introkc} Let $G$ be a group acting by simplicial isometry on some simplicial tree $T$ and let $k\geq0$ and $C>0$ be integers. We say that the action of $G$ on $T$ is $(k, C)$\emph{-acylindrical} if the pointwise stabiliser of any edge path in $T$ of length at least $k$ contains at most $C$ elements.

If there exist such a $k$ and $C$ we simply say that the action of $G$ on $T$ is \emph{acylindrical}.
\end{definition}
We say that a group $G$ is \emph{acylindrically arboreal} if $G$ admits an acylindrical action on a tree that is \emph{non-elementary}, or equivalently that the action has no global fixed point or invariant axis. Noting that simplicial trees are all $0$-hyperbolic metric spaces, acylindrical arboreality may be viewed as a special case of acylindrical hyperbolicity, and so it is natural to ask the following question.

\begin{question}\label{q:main1}
    When is an Artin group $A_\Gamma$ acylindrically arboreal?
\end{question}

This question was answered for \emph{right-angled} Artin groups, or Artin groups defined by a presentation graph $\Gamma_S$ all of whose labels are $2$ \cite{Cohen23}. In this simpler case, the following holds as an immediate consequence of the proof of the cited theorem.

\begin{theorem}\cite[Theorem~1.3]{Cohen23}\label{cohen23}
    Let $A_S$ be a right-angled Artin group with presentation graph $\Gamma_S$. Then the following are equivalent:
    \begin{enumerate}
        \item $A_S$ admits a non-elementary acylindrical splitting;
        \item There is a visual splitting for $A_S$ that is non-elementary and acylindrical; and
        \item The graph theoretical diameter of $\Gamma_S$ is at least $3$.
    \end{enumerate}
\end{theorem}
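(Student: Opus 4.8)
The plan is to prove the cycle $(3)\Rightarrow(2)\Rightarrow(1)\Rightarrow(3)$. The implication $(2)\Rightarrow(1)$ is immediate, since a visual splitting is in particular a splitting, i.e.\ an action on its Bass--Serre tree; if that action is non-elementary and acylindrical it directly witnesses $(1)$. All the content therefore lies in the construction $(3)\Rightarrow(2)$ and the obstruction $(1)\Rightarrow(3)$.

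For $(3)\Rightarrow(2)$, assume $\diam(\Gamma_S)\ge 3$ and fix vertices $u,v$ with $d(u,v)\ge 3$. Writing $N_i=\{x:d(x,u)=i\}$, I would take the visual splitting $A_S=A_{\Gamma_1}\ast_{A_{\Gamma_0}}A_{\Gamma_2}$ determined by the full subgraphs $\Gamma_0=N_1\cup N_2$, $\Gamma_1=N_0\cup N_1\cup N_2$ (the vertices within distance $2$ of $u$), and $\Gamma_2=\Gamma_S\setminus\{u\}$, so that $\Gamma_1\cap\Gamma_2=\Gamma_0$ and the two sides are $\Gamma_1\setminus\Gamma_0=\{u\}$ and $\Gamma_2\setminus\Gamma_0=\{x:d(x,u)\ge 3\}\ni v$. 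Since $u$ has neighbours only in $N_1\subseteq\Gamma_0$, there are no edges across the two sides, so this is a genuine separation; both sides are non-empty (the second because $d(u,v)\ge 3$), and as $A_{\Gamma_0}$ has infinite index in each vertex group the resulting action has no global fixed point and no invariant line, giving non-elementarity. The decisive feature of this separator is that no vertex of $\Gamma_0$ straddles the two sides: a vertex of $N_1$ can only reach across to $u$ (it has no neighbour at distance $\ge 3$ from $u$), while a vertex of $N_2$ can only reach across to $\{x:d(x,u)\ge 3\}$ (being non-adjacent to $u$).

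The heart of $(3)\Rightarrow(2)$ is then the lemma that a visual splitting with this no-straddling property is acylindrical. Here I would argue that if $1\ne g$ fixes an edge of the Bass--Serre tree then, after conjugation, $g\in A_{\Gamma_0}$, and that $g$ can fix a longer and longer geodesic only if at each successive vertex there is a new element of the adjacent vertex group commuting with $g$; by the standard description of intersections $A_{\Gamma_0}\cap h A_{\Gamma_0}h^{-1}$ of conjugate parabolic subgroups in a right-angled Artin group, such commuting elements must have support in the link of $\supp(g)$. The no-straddling condition forces this link to stay on a single side, so the fixed geodesic cannot keep alternating across the edge group and must terminate after boundedly many steps, yielding a uniform $(k,C)$ with trivial stabilisers. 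Controlling these conjugate-parabolic intersections is the step demanding the most care.

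For $(1)\Rightarrow(3)$ I argue by contraposition: assuming $\diam(\Gamma_S)\le 2$, I show $A_S$ has no non-elementary acylindrical action on any tree. If $\Gamma_S$ is disconnected then $\diam=\infty\ge 3$, so I may assume it connected. Such an action would make $A_S$ acylindrically hyperbolic, hence of finite, and so (being torsion-free) trivial, centre; since the centre of $A_S$ is generated by the vertices adjacent to all others, this already excludes any cone vertex. The combinatorial input is that when $\diam(\Gamma_S)\le 2$ every full separating subgraph contains a straddling vertex: if $\Gamma_0$ separates $a$ and $b$ into distinct components, then $d(a,b)\le 2$ supplies a common neighbour $w$, which must lie in $\Gamma_0$ and is adjacent to both components. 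Thus no straddle-free separator exists, so by the previous paragraph no visual splitting can be both non-elementary and acylindrical. It remains to convert an \emph{arbitrary} non-elementary acylindrical tree action into such a straddle-free visual separation, so that this combinatorial obstruction applies; this reduction — essentially that acylindrical splittings of $A_S$ are dominated by visual ones enjoying the no-straddling property — is the main obstacle, and is where I would invoke the structural theory of splittings of right-angled Artin groups underlying the cited theorem. Combining the three steps closes the cycle.
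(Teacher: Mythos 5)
Your implications $(2)\Rightarrow(1)$ and $(3)\Rightarrow(2)$ are in reasonable shape. The sphere-based splitting $A_{\Gamma_1}*_{A_{\Gamma_0}}A_{\Gamma_2}$ around a vertex $u$ at distance at least $3$ from $v$ does have your no-straddling property, and in the right-angled (hence even) setting this is exactly the condition that the neighbourhoods of the two sides are disjoint, i.e.\ the no-odd-path criterion of Theorem~\ref{Thm}; so, modulo the conjugate-parabolic-intersection bookkeeping you flag (which for right-angled groups can be done with normal forms, or by quoting Theorem~\ref{Thm}, since right-angled Artin groups have PIP and RP), that half can be completed. The genuine gap is $(1)\Rightarrow(3)$, the hardest implication. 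There you reduce to showing that an arbitrary non-elementary acylindrical action on a tree is ``dominated by'' a straddle-free visual splitting, admit this is the main obstacle, and propose to close it by invoking ``the structural theory of splittings of right-angled Artin groups underlying the cited theorem.'' In a blind proof this is circular: that reduction \emph{is} the content of the theorem (it is essentially $(1)\Rightarrow(2)$), and no off-the-shelf domination theorem of this kind exists. Note also that this paper itself does not reprove the statement --- it is quoted from \cite{Cohen23} --- so there is nothing in the present text you could lean on for that step either.

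Moreover, the reduction you ask for is unnecessary: the standard argument for $\neg(3)\Rightarrow\neg(1)$ works directly with an arbitrary acylindrical action, never passing through visual splittings. Suppose $\diam(\Gamma_S)\leq 2$ with $\Gamma_S$ connected, and let $A_S$ act acylindrically on a tree $T$. For each edge $\{u,v\}$ of $\Gamma_S$ the subgroup $\langle u,v\rangle\cong\Z^2$ must be elliptic (a loxodromic in an acylindrical action on a tree has virtually cyclic centraliser, so $\Z^2$ consists of elliptics, and a finitely generated group of elliptics fixes a point), so every generator is elliptic and adjacent generators have intersecting fixed trees. If $u,v$ are non-adjacent, they have a common neighbour $c$ since $d(u,v)\leq 2$; if $\operatorname{Fix}(u)\cap\operatorname{Fix}(v)=\emptyset$ then $uv$ is loxodromic and commutes with the elliptic element $c$, forcing the infinite cyclic group $\langle c\rangle$ to fix the axis of $uv$ pointwise, contradicting acylindricity --- the same mechanism as Lemma~\ref{lem:onlyif}. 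Hence the fixed trees of the generators pairwise intersect, and the Helly property for subtrees gives a global fixed point, so the action is elementary. This disposes of $(1)\Rightarrow(3)$ outright. Finally, a smaller logical slip: in your contraposition paragraph you conclude that no visual splitting is acylindrical ``by the previous paragraph,'' but that paragraph proved no-straddling implies acylindrical, whereas you need the converse-type statement that a straddling vertex obstructs acylindricity; that statement is true (again by the elliptic-commuting-with-loxodromic argument), but you never establish it.
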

It follows that for the right angled case, one need only consider visual splittings to decide acylindrical arboreality, and the existence of an acylindrical visual splitting reduces to a simply verified condition on the presentation graph. 

In the world of general Artin groups such a classification cannot hold. Indeed, with Example~\ref{badEx} we give an example of an acylindrically arboreal Artin group whose given presentation graph does not allow for any acylindrical visual splittings. However, a large part of Question~\ref{q:main1} can be decided by classifying the acylindricity of visual splittings of Artin groups. To this end, we are able to prove the following, which classifies the acylindricity of all visual splittings of Artin groups that satisfy certain conditions conjectured to hold for all Artin groups.
\begin{restatable}{theorem}{thmmain}\label{Thm}
Let $A_S$ be an Artin group with presentation graph $\Gamma_S$, and $X, Y\subseteq S$ such that $A_S=A_X*_{A_Z}A_Y$ is a non-trivial visual splitting. Assume further that there exist Artin groups $A_{X'}$ and $A_{Y'}$ with the parabolic intersection and ribbon properties (see Section~\ref{Artin} for relevant definitions) that contain $A_X$ and $A_Y$ respectively as special subgroups. Then $A_S=A_X*_{A_Z}A_Y$ is a non-elementary acylindrical splitting if and only if the neighbourhoods $N_{\Gamma_X}(X\setminus Z)$ and $N_{\Gamma_Y}(Y\setminus Z)$ are not connected by an odd labelled path in $\Gamma_S$.
\end{restatable}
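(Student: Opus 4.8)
The plan is to analyze the action of $A_S$ on the Bass--Serre tree $T$ of the splitting $A_S = A_X *_{A_Z} A_Y$, where $Z = X\cap Y$. Recall that vertices of $T$ are the cosets $gA_X$ and $gA_Y$, edges are the cosets $gA_Z$, the stabiliser of a vertex is the corresponding conjugate of $A_X$ or $A_Y$, and the stabiliser of an edge is the corresponding conjugate of $A_Z$. Since the pointwise stabiliser of an edge path is the intersection of the edge stabilisers along it, and these intersections are nested as the path grows, the statement reduces to controlling the \emph{fixed subtree} $T^g=\{x\in T:gx=x\}$ of a nontrivial element $g$. Concretely, because every Artin group known to satisfy the parabolic intersection and ribbon properties is torsion-free (so that nontrivial pointwise stabilisers are infinite), the action is acylindrical if and only if $\sup_{g\neq 1}\diam(T^g)<\infty$: a uniform bound $k$ on these diameters yields $(k+1,1)$-acylindricity, while fixed subtrees of unbounded diameter produce paths of every length with nontrivial, hence infinite, pointwise stabiliser. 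Non-elementarity is immediate and independent of the odd-path condition: since $A_Z$ has infinite index in each of $A_X$ and $A_Y$, every vertex of $T$ has infinite valence, so $T$ is unbounded with no global fixed point, and as the splitting is non-trivial with neither factor equal to $A_Z$ there is no invariant axis.

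Assume first that $N_{\Gamma_X}(X\setminus Z)$ and $N_{\Gamma_Y}(Y\setminus Z)$ are \emph{not} joined by an odd-labelled path, and let $1\neq g$ fix an edge path $p=(e_1,\dots,e_n)$; I must bound $n$. Each consecutive pair $e_i,e_{i+1}$ meets a common vertex $v_i$ whose stabiliser is a conjugate of $A_X$ or $A_Y$, and there the two edge stabilisers are two conjugates of $A_Z$ sitting inside that vertex group. Embedding the vertex group into $A_{X'}$ (resp. $A_{Y'}$) and invoking the parabolic intersection property, the running intersection $\bigcap_{i\le j}\mathrm{Stab}(e_i)$ is at each stage a \emph{parabolic} subgroup, and I would track the decreasing sequence of its supports. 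The key step is to show that each time $p$ turns at a vertex the support must strictly shrink unless the turn is realised by a \emph{ribbon}, i.e. an element conjugating one standard generator of $A_Z$ to another while preserving parabolic type; by the ribbon property such conjugations between standard generators are carried exactly by products of the half-twists along odd-labelled edges. Consequently a standard generator of the support can survive a turn on the $X$-side (resp. the $Y$-side) only if it is joined by an odd-labelled path to $N_{\Gamma_X}(X\setminus Z)$ (resp. to $N_{\Gamma_Y}(Y\setminus Z)$); a generator surviving turns on both sides would then be joined by odd-labelled paths to both neighbourhoods, producing an odd-labelled path between them. Since no such path exists, the support drops within a number of turns bounded in terms of $|S|$, after which the intersection is trivial; this bounds $n$ and gives acylindricity.

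For the converse, suppose there is an odd-labelled path $u=t_0,t_1,\dots,t_\ell=w$ with $u\in N_{\Gamma_X}(X\setminus Z)$ and $w\in N_{\Gamma_Y}(Y\setminus Z)$. Writing $\Delta_i\in A_{t_{i-1}t_i}$ for the half-twist of the $i$-th odd edge, the product $r=\Delta_\ell\cdots\Delta_1$ satisfies $r\,u\,r^{-1}=w$ and lies in $A_Z$, so it conjugates the $X$-interface generator to the $Y$-interface generator inside the edge group. I would then combine $r$ with the generators $a\in X\setminus Z$ and $b\in Y\setminus Z$ neighbouring $u$ and $w$ to build a loxodromic element $h\in A_S$ whose axis crosses translates of the base edge $e_0=A_Z$, arranged so that a fixed conjugate generator persists along the whole axis; equivalently, I produce a nontrivial $z$ (a conjugate of $u$) with $z\in\bigcap_{m}h^{m}A_Z h^{-m}$, so that $z$ fixes every edge of the axis of $h$. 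The fixed subtree $T^z$ then has infinite diameter, so by the first paragraph the action is not acylindrical.

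The main obstacle is the support-shrinking argument of the second paragraph: translating the purely combinatorial hypothesis ``no odd-labelled path between the neighbourhoods'' into the algebraic statement that intersections of edge stabilisers collapse after boundedly many turns. This requires careful bookkeeping of how parabolic supports restrict when one passes from a vertex group to an adjacent one through $A_Z$, together with a precise description of which elements conjugate a standard generator of $A_Z$ back into $A_Z$ --- exactly where the ribbon property, and the hypothesis that $A_X$ and $A_Y$ embed as special subgroups in Artin groups $A_{X'},A_{Y'}$ enjoying both properties, are indispensable, since $A_X$ and $A_Y$ themselves may satisfy neither. A secondary technical point is checking in the converse that the element $h$ built from $r$, $a$ and $b$ is genuinely loxodromic rather than elliptic and that $z$ fixes its entire axis, which amounts to verifying that the conjugating ribbon interacts correctly with the hyperbolic dynamics on $T$.
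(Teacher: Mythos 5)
Your overall architecture matches the paper's: prove acylindricity when no odd-labelled path joins the neighbourhoods by controlling intersections of edge stabilisers via PIP and RP, and conversely use an odd path to manufacture an infinite cyclic subgroup fixing an unbounded subtree. But in both directions the step carrying the actual content is left as a declared ``obstacle'' rather than proved, and in each case the sketch you offer in its place would not go through as stated. In the forward direction, the assertion that a generator of the support can survive a turn only if it is odd-path connected to the relevant neighbourhood is precisely the paper's key lemma (Lemma~\ref{lem:Stabs}), and your justification --- that ``by the ribbon property such conjugations between standard generators are carried exactly by products of the half-twists along odd-labelled edges'' --- mischaracterises ribbons: an elementary ribbon is an element $\Delta_U^{-1}\Delta_{U\setminus\{t\}}$ attached to a spherical component $U$ of the Dynkin diagram, not a product of half-twists along odd edges. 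The correct bridge from ribbons to odd paths is Theorem~\ref{thm:oddpathconj} (conjugate standard generators are joined by odd-labelled paths), combined with an induction over elementary ribbons: if some generator $z$ of the support is \emph{not} odd-connected to $N_{\Gamma_X}(X\setminus Z)$, then the auxiliary vertex $t$ of any elementary ribbon issuing from the support cannot lie in $X\setminus Z$, so every such ribbon stays inside $A_{(S\setminus X)\cup Z}$, forcing the conjugating element into $A_Y\cap A_X=A_Z$, a contradiction. Without this argument (or a substitute) your proof has no core. Note also that, carried out correctly, one gets much more than ``support drops within a number of turns bounded in terms of $|S|$'': a generator surviving one turn on each side would be odd-connected to both neighbourhoods simultaneously, so stabilisers of $3$-paths are already trivial and the action is $(3,1)$-acylindrical.

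In the converse direction, ``combining $r$ with the generators $a$ and $b$'' cannot work as written, because $a$ need not commute with $u$ (nor $b$ with $w$), so there is no reason a conjugate of $u$ should be fixed along the axis of anything built from $a$, $r$, $b$. The missing idea (paper's Lemma~\ref{lem:onlyif}) is to replace $a$ and $b$ by elements that do centralise the interface generators while lying outside $A_Z$: the centre of the dihedral special subgroup $A_{\{a,u\}}$ (its Garside element, or $a$ itself when the label is $2$) gives $z_{a,u}\in A_X\setminus A_Z$ commuting with $u$, and similarly $z_{b,w}\in A_Y\setminus A_Z$ commuting with $w$; then $\left(r^{-1}z_{b,w}r\right)z_{a,u}$ is a product of elliptic elements from opposite sides of the splitting, hence loxodromic, and it centralises $u$, so $\langle u\rangle$ fixes its whole axis pointwise. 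Finally, your reduction to $\sup_{g\neq 1}\diam(T^g)<\infty$ leans on the claim that ``every Artin group known to satisfy PIP and RP is torsion-free''; that is an empirical remark about known examples, not a consequence of your hypotheses, and it is also unnecessary: the only element that needs infinite order is a conjugate of a standard generator, and $\langle u\rangle\cong\Z$ follows from Theorem~\ref{Van der Lek}.
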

\begin{restatable}{corollary}{cormain}\label{cor:main}
    Let $A_S$ be an Artin group with the parabolic intersection and ribbon properties and with presentation graph $\Gamma_S$. Then $A_S$ has an non-elementary acylindrical splitting arising as a visual splitting if and only if there exists two vertices $a$ and $b$ of $\Gamma_S$ whose neighbourhoods are not joined by a path with odd labels.
\end{restatable}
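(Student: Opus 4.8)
The plan is to deduce Corollary~\ref{cor:main} from Theorem~\ref{Thm} by restricting attention to the ``minimal'' visual splittings, those in which each side contributes only a single vertex. First I would record that the hypotheses of Theorem~\ref{Thm} are automatically available in this setting: for any visual splitting $A_S = A_X *_{A_Z} A_Y$ the factors $A_X$ and $A_Y$ are special subgroups of $A_S$, so since $A_S$ is assumed to have the parabolic intersection and ribbon properties, one may simply take $A_{X'} = A_{Y'} = A_S$ as the required containing groups. Thus Theorem~\ref{Thm} applies verbatim to every non-trivial visual splitting of $A_S$, and the content of the corollary is that the global existence of a non-elementary acylindrical visual splitting is already detected by a condition on a single pair of vertices.

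For the reverse implication, suppose $a$ and $b$ are two vertices whose neighbourhoods are not joined by an odd labelled path. Under the conventions fixed earlier an adjacent pair always has joined neighbourhoods (through a shared vertex of their closed neighbourhoods), so $a$ and $b$ must be non-adjacent; this is exactly what makes a separating splitting available. I would then set $Z = S \setminus \{a, b\}$, $X = S \setminus \{b\}$ and $Y = S \setminus \{a\}$. Since $a$ and $b$ are non-adjacent there are no edges between $X \setminus Z = \{a\}$ and $Y \setminus Z = \{b\}$, so $E(\Gamma_X) \cap E(\Gamma_Y) = E(\Gamma_Z)$ and $A_S = A_X *_{A_Z} A_Y$ is a genuine non-trivial visual splitting. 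For this splitting the relevant neighbourhood sets collapse to single-vertex neighbourhoods, namely $N_{\Gamma_X}(X \setminus Z) = N_{\Gamma_S}(a)$ and $N_{\Gamma_Y}(Y \setminus Z) = N_{\Gamma_S}(b)$, the last equalities holding because $a \not\sim b$. Hence the assumption on $a$ and $b$ is precisely the criterion of Theorem~\ref{Thm}, which then delivers a non-elementary acylindrical splitting.

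For the forward implication, suppose $A_S = A_X *_{A_Z} A_Y$ is a non-elementary acylindrical visual splitting. Theorem~\ref{Thm} tells us that $N_{\Gamma_X}(X \setminus Z)$ and $N_{\Gamma_Y}(Y \setminus Z)$ are not joined by an odd path. Since the splitting is non-trivial I may choose vertices $a \in X \setminus Z$ and $b \in Y \setminus Z$, and these are non-adjacent because the splitting places no edges between the two sides. As every neighbour of $a$ lies in $X$, we get $N_{\Gamma_S}(a) \subseteq N_{\Gamma_X}(X \setminus Z)$, and symmetrically $N_{\Gamma_S}(b) \subseteq N_{\Gamma_Y}(Y \setminus Z)$. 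Any odd path joining $N_{\Gamma_S}(a)$ to $N_{\Gamma_S}(b)$ would then join the two larger sets, which is impossible, so the neighbourhoods of $a$ and $b$ are not joined by an odd path, producing the required pair of vertices.

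The genuine mathematical work sits entirely inside Theorem~\ref{Thm}; the corollary is a bookkeeping reduction, and the step I expect to require the most care is the precise translation between the two formulations of the neighbourhood condition. Concretely I need the monotonicity statement that ``not joined by an odd path'' is inherited by passing to subsets (used in the forward direction), together with the exact collapse of the set-neighbourhoods to vertex-neighbourhoods for the minimal splitting (used in the reverse direction), and I must check that the open-versus-closed convention for $N_{\Gamma_\bullet}(\cdot)$ is matched on both sides. In particular the non-adjacency of $a$ and $b$ is what guarantees that a valid separating visual splitting exists at all, so confirming that the neighbourhood hypothesis forces non-adjacency is the one genuinely non-formal point to pin down.
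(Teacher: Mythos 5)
Your proposal is correct and follows essentially the same route as the paper: both directions are deduced from Theorem~\ref{Thm} with $A_{X'}=A_{Y'}=A_S$, using the splitting $A_{S\setminus\{b\}}*_{A_{S\setminus\{a,b\}}}A_{S\setminus\{a\}}$ for one implication and choosing $a\in X\setminus Z$, $b\in Y\setminus Z$ with the subset-monotonicity of the odd-path condition for the other. If anything, you are more careful than the paper, which assumes non-adjacency of $a$ and $b$ without remarking (as you do) that it is forced by the neighbourhood hypothesis, and which leaves the collapse $N_{\Gamma_X}(X\setminus Z)=N_{\Gamma_S}(a)$ implicit.
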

This condition on the neighbourhoods of vertices should be compared to the concept of \emph{separated vertices} given by the author in \cite[Definition~3.8]{Cohen23}. This corollary applies in particular to large-type Artin groups, or Artin groups whose presentation graphs have no edges labelled $3$, which are known to satisfy the parabolic intersection property by \cite[Theorem~1.3]{Blufstein} and the ribbon property by \cite[Corollary~4.12]{GODELLE200739}. We therefore present the following as a concrete consequence of our main theorem.
\begin{corollary}
    An Artin group $A_X$ with associated graph $\Gamma_X$ of large-type has a non-elementary acylindrical splitting arising as a visual splitting if and only if there exists two vertices $a$ and $b$ of $\Gamma_X$ whose neighbourhoods are not joined by path with odd labels.
\end{corollary}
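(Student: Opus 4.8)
The plan is to deduce this statement as a direct specialisation of Corollary~\ref{cor:main}, whose hypotheses require the Artin group to possess both the parabolic intersection and ribbon properties. Accordingly, the first task is to verify that an arbitrary large-type Artin group $A_X$ meets these two hypotheses. For the parabolic intersection property I would invoke \cite[Theorem~1.3]{Blufstein}, which establishes this property across the large-type class; for the ribbon property I would appeal to \cite[Corollary~4.12]{GODELLE200739}. Once both properties are in hand, $A_X$ lies squarely within the scope of Corollary~\ref{cor:main}.

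With the hypotheses confirmed, the biconditional is then inherited verbatim. Corollary~\ref{cor:main} asserts that any Artin group carrying the two named properties admits a non-elementary acylindrical visual splitting precisely when there exist two vertices of its presentation graph whose neighbourhoods are not joined by an odd-labelled path. Since every large-type $A_X$ satisfies these properties, the identical equivalence holds for $A_X$, which is exactly the claimed statement. No new geometric or combinatorial argument is required beyond this specialisation.

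The one point meriting care, and the only place I anticipate any friction, is definitional consistency: one must check that the notion of large-type invoked here agrees with the conventions under which \cite{Blufstein} and \cite{GODELLE200739} establish the parabolic intersection and ribbon properties, so that the cited results genuinely apply to every graph $\Gamma_X$ covered by the corollary. Provided the conventions align — as they do in the standard formulation — there is no substantive obstacle, and the corollary follows immediately by instantiating Corollary~\ref{cor:main} in the large-type setting.
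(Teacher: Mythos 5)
Your proposal is correct and matches the paper's own argument exactly: the paper likewise observes that large-type Artin groups have the parabolic intersection property by \cite[Theorem~1.3]{Blufstein} (as they are $2$-dimensional and $(2,2)$-free) and the ribbon property by \cite[Corollary~4.12]{GODELLE200739}, and then obtains the statement as an immediate instance of Corollary~\ref{cor:main}. No gap here; the definitional-consistency caveat you raise is handled in the paper by noting large-type groups sit inside the $2$-dimensional, $(2,2)$-free class covered by the cited results.
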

Finally we have the following corollary, which should be viewed as a generalisation of the equivalence of (2) and (3) in Theorem~\ref{cohen23}.
\begin{corollary}
    Let $A_S$ be an Artin group with the parabolic intersection and ribbon properties and with presentation graph $\Gamma_S$. Assume further that $A_S$ is \emph{even}, so all labels in the presentation graph $\Gamma_S$ are even. Then $A_S$ has a non-elementary acylindrical splitting arising as a visual splitting if and only if the diameter of $\Gamma_S$ is at least $3$.
\end{corollary}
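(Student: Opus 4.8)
The plan is to deduce this statement directly from Corollary~\ref{cor:main}, reducing its odd-path condition to the purely metric condition $\diam(\Gamma_S) \geq 3$ using the hypothesis that every edge label is even. Since $A_S$ has the parabolic intersection and ribbon properties, Corollary~\ref{cor:main} applies verbatim: $A_S$ admits a non-elementary acylindrical splitting arising as a visual splitting if and only if there are two vertices $a, b$ of $\Gamma_S$ whose neighbourhoods are not joined by a path with odd labels. It therefore suffices to show that, under the evenness hypothesis, such a pair $a, b$ exists precisely when $\diam(\Gamma_S) \geq 3$.

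First I would observe that evenness collapses the notion of an odd-labelled path. By assumption every edge of $\Gamma_S$ carries an even label, so $\Gamma_S$ has no odd-labelled edge whatsoever. Consequently any path all of whose edges are odd-labelled can contain no edges, i.e.\ it is a single vertex. Hence the neighbourhoods of $a$ and $b$ are joined by an odd-labelled path if and only if they meet in a common vertex; equivalently, ``not joined by a path with odd labels'' becomes the condition that the neighbourhoods of $a$ and $b$ share no vertex, together with the requirement (implicit in the existence of a visual splitting separating them) that $a$ and $b$ be non-adjacent.

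Next I would translate this into graph distance. Writing $d = d_{\Gamma_S}$ for the path metric on the presentation graph, a vertex $v$ lies in the (closed) neighbourhoods of both $a$ and $b$ exactly when $d(v,a) \leq 1$ and $d(v,b) \leq 1$, which by the triangle inequality forces $d(a,b) \leq 2$; conversely any pair with $d(a,b) \leq 2$ admits such a common vertex, and any adjacent pair is in any case not separable by a visual splitting. Thus a pair $a, b$ witnesses the condition of Corollary~\ref{cor:main} if and only if $d(a,b) \geq 3$. Since $\diam(\Gamma_S)$ is by definition the supremum of $d(a,b)$ over all pairs of vertices (with the usual convention $\diam(\Gamma_S) = \infty$ when $\Gamma_S$ is disconnected, in which case $A_S$ is a non-trivial free product and the conclusion is immediate), a pair at distance at least $3$ exists if and only if $\diam(\Gamma_S) \geq 3$. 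Combining this equivalence with Corollary~\ref{cor:main} yields the statement.

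The only delicate point is the degenerate-path bookkeeping in the middle step: I must be careful to include length-zero paths when interpreting ``joined by a path with odd labels'', and correspondingly to use closed neighbourhoods (equivalently, to impose non-adjacency of $a$ and $b$) so that adjacent pairs such as consecutive vertices of a path graph are correctly excluded. Once this convention is pinned down to match exactly the one used in the proof of Corollary~\ref{cor:main}, the reduction to $\diam(\Gamma_S) \geq 3$ is routine, and recovers the equivalence of $(2)$ and $(3)$ in Theorem~\ref{cohen23} in the right-angled special case where all labels equal $2$.
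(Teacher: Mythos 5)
Your proposal is correct and follows the route the paper intends: the corollary is an immediate consequence of Corollary~\ref{cor:main}, since evenness forces any odd-labelled path to be a single vertex, so two closed neighbourhoods $N_{\Gamma_S}(a)$ and $N_{\Gamma_S}(b)$ fail to be joined by an odd path exactly when they are disjoint, i.e.\ exactly when $d(a,b)\geq 3$, which occurs for some pair if and only if $\diam(\Gamma_S)\geq 3$. Your bookkeeping on degenerate paths and closed neighbourhoods (including that disjointness already forces non-adjacency) matches the paper's conventions, so the reduction is complete.
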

\subsection{Application to The Tits Alternative}
Part of the significance of acylindrical actions on trees is that they are acylindrical actions on hyperbolic spaces in which the elliptic subgroups are controlled --- every subgroup of a group acting acylindrically on a tree either acts with at least one loxodromic or fixes a point. This allows one to turn well known facts about acylindrically hyperbolic groups into powerful combination theorems. For example, a recent paper by Hagen, Martin and Sartori \cite{HMS25} proved that the Wise power alternative, a well-studied negative curvature property for groups, is inherited from the vertex stabilisers of an acylindrical action on a tree.

The main such property that we consider in this paper is the strong Tits alternative. We say that a group $G$ satisfies the \emph{Tits alternative} if for all finitely generated subgroups $H\leq G$, $H$ is either virtually soluble or contains a non-abelian free group. This property was introduced by Jacques Tits in 1972, who proved that all linear groups of any characteristic satisfy this condition \cite{TITS1972}. In the years since, many important groups have been shown to satisfy the Tits alternative, including hyperbolic groups (somewhat trivially, as the span of any two elements in a hyperbolic group is either virtually cyclic or contains a free group), mapping class groups of hyperbolic surfaces \cite{McCarthyTitsMapping}, automorphism groups of free groups \cite{BestvinaFeighnHandel00, BestvinaFeighnHandel05}, and all cocompactly cubulated groups \cite{sageev_wise_2005}. Similarly, we say that a group $G$ satisfies the \emph{strong Tits alternative} if for \emph{all} (not just finitely generated) subgroups $H\leq G$, $H$ is either virtually soluble or contains a non-abelian free group. This is a much stronger property, and was shown by Tits to be satisfied by linear groups in characteristic zero \cite{TITS1972}.

Much of the research into Artin groups in recent years has been into their non-positive curvature properties, and the strong Tits alternative is often viewed as such a property, so it is natural to ask which Artin groups satisfy this property (see \cite[Question~1]{Bestvina}, for example). This question has attracted great interest, and many partial results have been proven, but the general question is still very much open. We believe the following is a complete survey at the time of writing --- for definitions of the classes of Artin groups mentioned, see Section~\ref{Artin}.

\begin{itemize}
    \item Spherical Artin groups were shown to be linear of characteristic zero \cite[Theorem~1.1]{CohenWales2002}, so satisfy the strong Tits alternative by \cite[Theorem~1.1]{TITS1972} as above.
    \item Artin groups that are cocompactly cubulated will satisfy the tits alternative by \cite[Theorem~1.1]{sageev_wise_2005} as above, and indeed will satisfy the strong Tits alternative by the same result. An important class of cocompactly cubulated Artin groups is the class of right-angled Artin groups, but beyond this few Artin groups are known to cocompactly cubulate \cite[Theorems C and D]{Haettel},\cite[Theorem~1.1]{HuangJankiewiczPrzytycki16}, and conjecturally only Artin groups satisfying very strict conditions will enjoy this property \cite[Conjecture~A]{Haettel}.
    \item Artin groups of FC-type will satisfy the strong Tits alternative by \cite[Theorem~B]{MartinPrzytyckiFC}. This proof again uses a cocompact action on a CAT(0) cube complex, but instead of requiring that the action is proper Martin and Przytycki require that all stabilisers satisfy the strong Tits alternative, and placing a strong condition on the stabilisers of intersecting cubes.
    \item Artin groups acting on certain $2$-complexes were shown to satisfy the strong Tits alternative by \cite{OSAJDA2021107976}, including large type Artin groups \cite[Theorem~A.2]{OSAJDA2021107976} and some other $2$-dimensional examples.
    \item 2-dimensional Artin groups will satisfy the strong Tits alternative by \cite[Theorem~A]{MARTIN2024294}. We also mention \cite{MartinPrzytycki2dhyp}, where it was proved that hyperbolic-type 2-dimensional Artin groups will satisfy the Tits alternative by observing that such groups act acylindrically on a hyperbolic space such that the maximal elliptic subgroups can be classified.
\end{itemize}

Our strategy to expand this list will be to use the fact that in an acylindrical action on a tree maximal elliptic subgroups are easy to understand, as mentioned above. In particular, we believe the following is well known, although we include a proof in Section~\ref{sec:Tits} for completeness.
    \begin{lemma}
        Let $(\Gamma, \mathfrak{G})$ be an acylindrical graph of groups with fundamental group $G$. Then $G$ satisfies the strong Tits alternative if and only if the vertex group $G_v$ satisfies the strong Tits alternative for all $v\in V(\Gamma)$.
    \end{lemma}
This lemma, coupled with Theorem~\ref{Thm}, allows us to combine previously known examples of Artin groups satisfying the Tits alternative to acquire a wealth of new examples, and such an example is demonstrated with Example~\ref{ex:Tits}.
\subsection{Acknowledgements} This work was completed while the author was a PhD student at the University of Cambridge, supervised by Jack Button, and the author is very grateful to his supervisor for all of his help. The author would also like to thank Giovanni Sartori for several very helpful discussions, and Alexandre Martin and Mar\'ia Cumplido for their helpful comments. Finally, financial support from the Cambridge Trust Basil Howard Research Graduate Studentship is gratefully acknowledged. 
\section{Preliminaries}
\subsection{Groups Acting Acylindrically on Trees}
We recall some graph theoretical notation, which we will use throughout this paper.
\begin{definition}\label{def:graph}
    Let $\Gamma=(V(\Gamma), E(\Gamma))$ be a graph. We say that $\Gamma$ is \emph{finite} if $|V(\Gamma)|<\infty$, and we say that $\Gamma$ is \emph{simple} if $E(\Gamma)$ contains no loops or multiedges.

    For a vertex $v$ of a finite simple graph $\Gamma$ we define the \emph{link}, denoted $\lk_\Gamma(v)$, of $v$ to be the set of vertices $u\in V(\Gamma)\backslash\{v\}$ such that there exists an edge $e\in E(\Gamma)$ incident on both $u$ and $v$. For a subset $A$ of $V(\Gamma)$ we define $\lk_\Gamma(A)$ to be the intersection $\lk_\Gamma(A)=\bigcap_{v\in A}\lk_\Gamma(v)$. We define the \emph{neighbourhood} of a vertex $v\in V(\Gamma)$ to be $N_\Gamma(v)=\lk_\Gamma(v)\cup \{v\}$, and the neighbourhood of a set of vertices $A$ to be the union 
    $N_\Gamma(A)=\bigcup_{v\in A}N(v)$.
\end{definition} 

\begin{example} We will refer to the following standard collections of graphs.
\begin{enumerate}
        \item We say that $\Gamma=(V, E)$ is a \emph{complete} graph if $E$ contains every possible unordered pair of distinct elements in $V$, and we say that $\Gamma$ is \emph{discrete} if the edge set $E$ is empty. If $|V|=n$ we denote these graphs as $K_n$ and $O_n$ respectively.
        \item We define the \textit{$n$-path} $P_n$ for $n\geq 2$ to be the unique (up to isomorphism) connected graph on $n$ vertices with $n-1$ edges and maximum vertex degree two, and the \textit{$n$-cycle} $C_n$ for $n\geq 3$ to be the unique (up to isomorphism) connected graph on $n$ vertices with $n$ edges such that the degree of every vertex is two. 
    \end{enumerate}
\end{example}

We will assume the reader has some familiarity with Bass--Serre theory, and for a more detailed discussion we refer to \cite{stilwell2002trees, dicks2011groups}. Let $(\Gamma, \mathfrak{G})$ be a \emph{graph of groups}, where $\Gamma$ is a connected directed graph that may not be finite or simple and $\mathfrak{G}$ is the following data:
\begin{itemize}
    \item To every vertex $v\in V(\Gamma)$ we assign a \emph{vertex group} $G_v$, and to every edge $e\in E(\Gamma)$ we assign an \emph{edge group} $G_e$;
    \item To every edge $e\in E(\Gamma)$ we assign monomorphisms $d_0:G_e\rightarrow G_{i(e)}$ and $d_1:G_e\rightarrow G_{t(e)}$, where $i(e)$ and $t(e)$ are the initial and terminal vertices of $e$ in $\Gamma$ respectively.
\end{itemize}

We will use a slight abuse of notation to consider each vertex group $G_v$ as a subgroup of the fundamental group $\pi_1(\Gamma, \mathfrak{G})$ along the natural inclusion. Similarly, we will consider each edge group $G_e$ to be the subgroup of the fundamental group given by the image of $d_0(G_e)$ in the vertex group $G_{i(e)}$. We call a graph of groups \emph{trivial} if there exists some $v\in V(\Gamma)$ such that $G_v= \pi_1(\Gamma, \mathfrak{G})$, or \emph{non-trivial} otherwise. We say that a graph of groups $(\Gamma, \mathfrak{G})$ is a \emph{graph of groups decomposition}  or \emph{splitting} of a group $G$ if the fundamental group $\pi_1(\Gamma, \mathfrak{G})$ is isomorphic to $G$. We denote by $T(\Gamma, \mathfrak{G})$ the \emph{Bass--Serre tree} associated to the splitting, on which $G$ acts naturally by isometry with respect to the edge metric and without inversion \cite[Section~I.5.3]{stilwell2002trees}.

The assumption that any action on a tree is simplicial and without inversion is easy to guarantee, so we will assume from now on that all actions on trees are by simplicial isometry and without inversion.

As in \cite[Section~I.5.4]{stilwell2002trees}, an action on a tree will give rise to a \emph{quotient graph of groups decomposition $(T/G, \mathfrak{G}$)} of $G$, where the vertex or edge group of a vertex or edge of $T/G$ is defined to have the isomorphism type of the stabiliser of any preimage of that vertex or edge in $T$, and the edge monomorphisms are defined similarly.

%
%
We now formally define acylindrical arboreality
\begin{definition}\cite[Introduction]{Weidmann07}\label{Def:kc} Let $G$ be a group acting on some tree $T$ and let $k\geq0$ and $C>0$ be integers. We say that the action of $G$ on $T$ is $(k, C)$\emph{-acylindrical} if the pointwise stabiliser of any edge path in $T$ of length at least $k$ contains at most $C$ elements.

We say that the action of $G$ on $T$ is \emph{acylindrical} if there exist constants $k$ and $C$ such that the action is $(k, C)$-acylindrical.
\end{definition}

This definition of acylindricity will agree with the more coarse-geometric definition due to Bowditch \cite[Introduction]{Bowditch08} when the latter definition is restricted to actions on trees, in a result essentially due to Osin and Minasyan \cite[Lemma~4.2]{minasyanOsin13}, and an explicit proof can be found in \cite[Theorem~2.17]{Cohen23}.

\begin{definition} \label{def:AA} We say that a group $G$ is \emph{acylindrically arboreal} if $G$ acts acylindrically on some tree $T$ with no global fixed points or invariant lines. This action will give rise to a quotient graph of groups decomposition of $G$, which we will call a \emph{non-elementary acylindrical} splitting of $G$. 
\end{definition}

Similarly, this definition of a non-elementary acylindrical action will agree with that of Bowditch when the latter is restricted to trees. It follows that any acylindrically arboreal group is acylindrically hyperbolic.
\subsection{Artin Groups}\label{Artin}
\begin{definition} Let $\Gamma_S$ be a \emph{labelled} finite simple graph with vertex set $S$, or equivalently a graph where each edge $\{u, v\}$ is labelled by an integer $m_{uv}\geq 2$. Then the \emph{Artin group} over $\Gamma$ is the group
    \[A_S:=\langle S\mid \{\underbrace{uvu\cdots uv}_{m_{uv}}= \underbrace{vuv\cdots vu}_{m_{uv}}  : \left\{u, v\right\}\in E(\Gamma)\}\rangle.\]

    We call the graph $\Gamma_S$ the \emph{presentation graph} of $A_S$.
\end{definition}

We will sometimes be interested in the graph representing an Artin group $G$ that ignores edges whose label is $2$ and includes edges that are not included in $\Gamma_S$ with the label $\infty$. Rigorously, we define the graph $\overline{\Gamma}_S$ to be the labelled graph whose vertex set is $V(\Gamma)$ and whose edge set is the union of \[E_{>2}(\Gamma_S) = \{\{u, v\}\in\Gamma\mid m_{u, v}>2\},\] with labels the same as in $\Gamma_S$, and \[\overline{E}(\Gamma_S):=\{\{u, v\}\mid u, v\in V(\Gamma_S), \{u, v\}\notin E(\Gamma_S)\}\] all of which are labelled $\infty$.

The graph $\overline{\Gamma}_S$ is called the \emph{Dynkin diagram} associated to $A_S$. Finally, the following are important classes of subgroups of Artin groups.

\begin{definition}
    Let $A_S$ be an Artin group and $X\subseteq S$ be a subset of $S$. We say that the subgroup of $A_S$ generated by $S$ is the \emph{special subgroup on $X$}, denoted $A_X$. If a subgroup $H$ of $A_S$ is conjugate to some special subgroup, we say that $H$ is a \emph{parabolic subgroup} of $A_S$.
\end{definition}

\begin{example}
    The following are standard families of Artin groups to which we will refer in this paper.
    \begin{enumerate}
        \item An Artin group $A_S$ is called \emph{dihedral} if $S$ contains exactly two elements and $\Gamma_S$ contains only a single edge.
        \item On the same graph $\Gamma_S$ we may define the \emph{Coxeter group} $C_S$ to be the group with the same presentation but for the added condition that each element of $S$ has order $2$. We say that an Artin group $A_S$ is \emph{spherical} if the corresponding Coxeter group $C_S$ is finite. For example, all dihedral Artin groups are spherical as their corresponding Coxeter groups are finite dihedral groups. Note that the Coxeter group on the discrete graph with two vertices is the infinite dihedral group, so an Artin group $A_S$ can be spherical only if $\Gamma_S$ is a complete graph, although this is not an equivalence.
        \item More generally, we say that an Artin group $A_S$ is of \emph{finite-clique-type} (or \emph{FC-type}) if every complete subgraph of $\Gamma_S$ represents a spherical special subgroup of $A_S$.
        \item In the other direction, we say that an Artin group $A_S$ is $2$-dimensional if every spherical special subgroup is generated by a subset of $S$ of size at most $2$. A particularly well studied subclass of $2$-dimensional Artin groups is the class of \emph{large-type} Artin groups, where the label of every edge of $\Gamma_S$ is at least $3$, or equivalently if $\overline{\Gamma}_S$ is complete.
        \item Finally, we say that an Artin group $A_S$ is \emph{$(2, 2)$-free} if every vertex $v$ on $V(\Gamma)$ has at most one edge incident on it with the label $2$.
        \end{enumerate}
\end{example}
\begin{definition}
    We say that an Artin group $A_S$ with presentation graph $\Gamma_S$ is \emph{reducible} if its Dynkin diagram $\overline{\Gamma}_S$ is disconnected, and we say that $A_S$ is irreducible otherwise. The \emph{irreducible components} of $S$ are the maximal subsets of $S$ that correspond to irreducible Artin groups.
\end{definition}
It will often be necessary to consider the spherical and non-spherical irreducible components of an Artin group separately. We therefore fix the following notation.
\begin{definition}
    Let $A_S$ be an Artin groups with presentation graph $\Gamma_S$, and let $X\subseteq S$ be a subset of the vertices of $\Gamma_S$. We denote by $X_s$ and $X_{as}$ respectively the union of the spherical irreducible components of $X$ and the union of the non-spherical components of $X$. 
\end{definition}
Finally, we will use the following definition of a the vertices that are in some sense orthogonal to a given set.
\begin{definition}
    Let $A_S$ be an Artin group with presentation graph $\Gamma_S$, and let $X\subseteq S$ be a subset of the vertices of $\Gamma_S$. We define $X^\perp$ to be given by 
    \[X^\perp = \{s\in S: \forall t\in X, \{s, t\}\in E(\Gamma_S)\text{ and }m_{st}=2\}.\]
    In particular, note that if $X=\emptyset$ then $X^{\perp}=S$.
    
\end{definition}
We have the following theorem due to Paris, which shows exactly when individual generators of an Artin group are conjugate.
\begin{theorem}\label{thm:oddpathconj}\cite[Theorem~4.2]{PARIS1997369} Let $A_S$ be an Artin group on the graph $\Gamma_S$. Then two generators $a, b\in A_S$ are conjugate in $A_S$ if and only if there is a path between $a$ and $b$ in $\Gamma_S$ with odd labels.
\end{theorem}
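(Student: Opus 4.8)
The plan is to prove the two implications separately, using an explicit conjugating element for sufficiency and the abelianisation of $A_S$ for necessity.

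For the \emph{if} direction, it suffices by transitivity of conjugacy to treat a single odd-labelled edge $\{u, v\}$ with label $m = m_{uv}$ and to exhibit an element conjugating $u$ to $v$. Let $w$ denote the alternating word $\underbrace{uvu\cdots}_{m-1}$ of length $m-1$ in $u$ and $v$ beginning with $u$. Since $m$ is odd, $w$ has even length and therefore ends in $v$; consequently the alternating word of length $m$ beginning with $u$ is exactly $wu$, while the alternating word of length $m$ beginning with $v$ is exactly $vw$. The defining Artin relation for this edge thus reads precisely $wu = vw$, whence $wuw^{-1} = v$, so $u$ and $v$ are conjugate already in the dihedral special subgroup $A_{\{u,v\}}$, and hence in $A_S$. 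Chaining these conjugacies along an odd-labelled path from $a$ to $b$ then shows that $a$ and $b$ are conjugate in $A_S$.

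For the \emph{only if} direction, I would pass to the abelianisation $A_S^{\mathrm{ab}}$ and read off the relations. Abelianising a single defining relation, an edge $\{u, v\}$ of even label $m = 2k$ contributes a trivial relation, since both sides of the relator contain $k$ copies of each generator, whereas an edge of odd label $m = 2k+1$ contributes $(k+1)[u] + k[v] = k[u] + (k+1)[v]$, i.e. $[u] = [v]$. Hence $A_S^{\mathrm{ab}} \cong \mathbb{Z}^{C}$, where $C$ is the set of classes of $S$ under the equivalence relation generated by $u \sim v$ for each odd-labelled edge; by transitivity of equality this is exactly the relation of being joined by an odd-labelled path, and under the quotient map each generator $s$ is sent to the standard basis vector indexed by its class. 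Since conjugate elements have equal image in any abelian quotient, if $a$ and $b$ are conjugate in $A_S$ then their images in $A_S^{\mathrm{ab}}$ coincide, forcing $[a] = [b]$ and hence the existence of an odd-labelled path between $a$ and $b$.

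The argument is elementary and no step presents a serious obstacle; the only point requiring genuine care is to confirm that the equivalence relation appearing in the abelianisation --- the transitive closure of the odd-edge identifications --- coincides with the odd-\emph{path} relation in the statement, which is immediate since equality in $\mathbb{Z}^C$ is transitive. I would remark that this also shows the odd-path relation to be an invariant of $A_S^{\mathrm{ab}}$, so that no finer distinction between generators can be detected by any homomorphism to an abelian group.
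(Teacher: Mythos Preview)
The paper does not prove this statement; it is quoted verbatim from \cite[Theorem~4.2]{PARIS1997369} with no argument supplied, so there is no ``paper's own proof'' to compare against. Your proposal is correct and entirely self-contained: the explicit conjugator $w$ for the \emph{if} direction works exactly as you describe (for odd $m$ the alternating word of length $m-1$ starting with $u$ ends in $v$, so the Artin relation reads $wu=vw$), and the abelianisation argument for the \emph{only if} direction is the standard and cleanest route, since the odd-edge identifications are precisely the relations surviving in $A_S^{\mathrm{ab}}\cong\mathbb{Z}^{C}$.

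If anything, your argument is more elementary than Paris's original, which is embedded in a broader analysis of parabolic subgroups and their conjugacy; the abelianisation trick bypasses all of that machinery for this particular statement. One small stylistic point: your closing remark that the odd-path relation is an invariant of $A_S^{\mathrm{ab}}$ is true but slightly circular as phrased, since that is exactly what you have just proved; it would be cleaner to say simply that no homomorphism to an abelian group can separate generators in the same odd-path class.
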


Special subgroups and their intersections are well understood. In particular, we have the following theorem of Van der Lek.

\begin{theorem}\cite[Theorem~4.13]{vanderLek}\label{Van der Lek}
    Let $A_S$ be an Artin group, and $X\subseteq S$. If $\Gamma_X$ is the subgraph of $\Gamma_S$ induced by $X$, then the special subgroup $A_X$ is isomorphic to the Artin group of $\Gamma_X$. Furthermore, if $Y\subseteq S$ then $A_X\cap A_Y=A_{X\cap Y}$.
\end{theorem}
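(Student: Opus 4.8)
This is Van der Lek's theorem, so I would expect the proof to be a substantial piece of work rather than a short manipulation; here I outline the two parts and where the content sits. The statement has two assertions: that the natural map from the abstract Artin group on the induced subgraph $\Gamma_X$ onto the special subgroup $A_X$ is an isomorphism, and that $A_X\cap A_Y=A_{X\cap Y}$. In each case one inclusion is free: the map onto $A_X$ is surjective by construction, and $A_{X\cap Y}\subseteq A_X\cap A_Y$ holds because every generator indexed by $X\cap Y$ lies in both $A_X$ and $A_Y$. The real content is (i) \emph{injectivity} of $A_{\Gamma_X}\to A_S$ and (ii) the reverse inclusion $A_X\cap A_Y\subseteq A_{X\cap Y}$.

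I would first note why no soft argument suffices. One cannot build a retraction $A_S\to A_{\Gamma_X}$ by sending the generators of $S\setminus X$ to the identity: across an odd-labelled edge the relation $\underbrace{st\cdots}_{m}=\underbrace{ts\cdots}_{m}$ is not preserved by such an assignment, so injectivity must come from elsewhere. Likewise (ii) cannot be proved by defining a ``support'' and arguing directly that it lies in $X\cap Y$, since the well-definedness of such a support for an arbitrary element is essentially equivalent to the theorem itself (by Theorem~\ref{thm:oddpathconj} odd-labelled edges make generators conjugate, and they are identified in the abelianisation, so the set of letters is not a priori an invariant of a group element).

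The plan is therefore to use the geometric model underlying Van der Lek's original argument. Realise $A_S$ as the fundamental group of the Salvetti complex $\mathrm{Sal}_S$ (equivalently, in spherical type, of the quotient by $C_S$ of the complement of the complexified reflection arrangement), so that the induced subgraph $\Gamma_X$ determines a natural subcomplex $\mathrm{Sal}_X\hookrightarrow\mathrm{Sal}_S$. Then (i) becomes $\pi_1$-injectivity of this inclusion, and (ii) becomes the statement that $\pi_1(\mathrm{Sal}_X)\cap\pi_1(\mathrm{Sal}_Y)=\pi_1(\mathrm{Sal}_{X\cap Y})$ inside $\pi_1(\mathrm{Sal}_S)$. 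Both would be deduced by passing to the Coxeter covering: the relevant complexes are covered with deck group $C_S$, and here one may invoke the classical facts for Coxeter groups (Bourbaki/Tits), namely that $C_X$ is the Coxeter group on $\Gamma_X$ and that $C_X\cap C_Y=C_{X\cap Y}$. The task is then to transfer these statements up the covering to $A_S$, controlling how loops in a subcomplex can bound, and how loops lying in both $\mathrm{Sal}_X$ and $\mathrm{Sal}_Y$ are forced into $\mathrm{Sal}_{X\cap Y}$.

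The main obstacle is exactly this transfer, that is, establishing the $\pi_1$-injectivity and the intersection formula at the level of the covering space. In spherical type one can instead run a cleaner algebraic argument through the positive monoid $A_S^+$: there the set of letters occurring in a positive word \emph{is} an invariant of the monoid element, since each defining relation uses the same two letters on both sides, and since $A_S^+$ injects into $A_S$ and $A_S$ is its group of fractions, both (i) and (ii) follow for positive elements and then for fractions. For general (non-spherical) type this fails, as $A_S$ need not be a group of fractions of $A_S^+$, which is why the genuinely difficult case requires the topological model and is where I would expect to spend the bulk of the effort.
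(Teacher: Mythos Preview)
The paper does not give a proof of this statement: it is quoted as a preliminary result with a citation to Van der Lek's thesis \cite[Theorem~4.13]{vanderLek} and is used as a black box thereafter. There is therefore nothing in the paper to compare your argument against.

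As for the outline itself, it is broadly faithful to the shape of Van der Lek's original argument, which works with the complement of the complexified reflection arrangement in the Tits cone rather than the Salvetti complex (the latter was introduced only afterwards), but the two models are homotopy equivalent and your description of how the Coxeter facts $C_X\hookrightarrow C_S$ and $C_X\cap C_Y=C_{X\cap Y}$ feed in is correct in spirit. Your diagnosis of why a naive retraction or ``support'' argument fails is also accurate. That said, what you have written is an outline rather than a proof: the passage ``transfer these statements up the covering to $A_S$'' is exactly where all the work lies, and you have not indicated any mechanism for doing it. In Van der Lek's argument this step is carried out by an explicit analysis of how the strata of the arrangement complement for $\Gamma_X$ sit inside those for $\Gamma_S$, and it is not short. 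If you intend this as more than a pointer to the literature, that is the gap you would need to fill.
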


In contrast, parabolic subgroups and their intersections have been a subject of much interest in the study of Artin groups. We have the following important property of parabolic subgroups of an Artin group.
\begin{definition}\label{def:PIP}
    We say that an Artin group $A_S$ has the \emph{parabolic intersection property (PIP)} if the intersection of any two parabolic subgroups of $G$ is a parabolic subgroup of $G$.
\end{definition} 

It is conjectured that every Artin group will have the parabolic intersection property, and it is known for many groups, for example for even Artin groups of FC-type \cite[Theorem~1.1]{Antolín_Foniqi_2022} and $2$-dimensional $(2, 2)$-free Artin groups \cite[Theorem~1.3]{Blufstein}. In particular, this latter class includes large-type Artin groups.

In an Artin group $A_X$ with the parabolic intersection property, we can put strong conditions on such intersections. Indeed, we have the following theorem of Blufstein and Paris.
\begin{theorem}\label{standard}\cite[Theorem~1.1]{blufsteinParis}
    Let $A_S$ be an Artin group, and let $X, Y\subseteq S$. If there exists $g\in A_S$ such that $gA_Xg^{-1}\leq A_Y$ then there exists $Z\subseteq Y$ and $h\in A_Y$ such that $gA_Xg^{-1}=hA_Zh^{-1}$. 

    In particular, if $A_S$ has the property PIP then for all $X, Y\subseteq S$, $g\in A_s$ there exists $Z\subseteq Y$ and $h\in A_Y$ such that $gA_Xg^{-1}\cap A_Y=hA_Zh^{-1}$.
\end{theorem}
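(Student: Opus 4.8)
The statement splits into a main clause and a second (``in particular'') clause, and the plan is to treat the main clause as the real content and then derive the second clause formally from it together with PIP. For the derivation: given $X,Y\subseteq S$ and $g\in A_S$, both $gA_Xg^{-1}$ and $A_Y$ are parabolic subgroups (a special subgroup is parabolic, being conjugate to itself by the identity), so PIP guarantees that $P:=gA_Xg^{-1}\cap A_Y$ is again parabolic, say $P=kA_Wk^{-1}$ for some $W\subseteq S$ and $k\in A_S$. Since $P\subseteq A_Y$ we have $kA_Wk^{-1}\subseteq A_Y$, so applying the main clause with $A_W$ in place of $A_X$ and conjugator $k$ yields $Z\subseteq Y$ and $h\in A_Y$ with $P=kA_Wk^{-1}=hA_Zh^{-1}$, which is exactly the desired conclusion.

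The heart of the matter is thus the main clause: if $gA_Xg^{-1}\subseteq A_Y$ then $gA_Xg^{-1}=hA_Zh^{-1}$ for some $Z\subseteq Y$ and $h\in A_Y$. Using van der Lek's Theorem~\ref{Van der Lek} to view $A_Y$ as the Artin group on the induced graph $\Gamma_Y$, this says precisely that a parabolic subgroup of $A_S$ lying inside the special subgroup $A_Y$ is already parabolic in $A_Y$. The plan is to choose the conjugator $g$ to be of minimal complexity within its double coset $A_Y\,g\,A_X$ and then to show that minimality forces $gA_Xg^{-1}$ to be standard inside $A_Y$ up to an inner automorphism of $A_Y$. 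Concretely I would try to factor a minimal conjugator between two standard parabolics into elementary ``ribbon'' moves, each carrying one standard parabolic to another by a single generator, so that composing these moves exhibits $gA_Xg^{-1}$ in the form $hA_Zh^{-1}$ with $h\in A_Y$ and $Z\subseteq Y$; the rank-one pieces of such a factorisation are governed by Paris's conjugacy criterion (Theorem~\ref{thm:oddpathconj}), which tells us exactly when a conjugate of a generator is again conjugate to a generator.

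The hard part will be controlling this conjugator. For spherical-type Artin groups a genuine Garside normal form makes minimal double-coset representatives tractable, but the statement is unconditional, so for an arbitrary $A_S$ there is no Garside structure to lean on, and the real work is to establish---without any finiteness or curvature hypothesis---that a minimal element carrying $A_X$ into $A_Y$ can be replaced by one lying in $A_Y$ up to a factor centralising $A_X$, all while staying within the required standard form. This is the technical core (supplied in the cited work of Blufstein and Paris); once it is in place, the reformulation via Theorem~\ref{Van der Lek} and the formal deduction of the second clause above complete the proof.
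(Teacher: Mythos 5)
This statement is not proved in the paper at all --- it is quoted verbatim as a citation to Blufstein--Paris, so there is no internal proof to compare against. Your treatment matches that situation exactly: your deduction of the ``in particular'' clause is correct (by PIP the intersection $gA_Xg^{-1}\cap A_Y$ is parabolic, say $kA_Wk^{-1}$, and since it lies in $A_Y$ the main clause converts it to the form $hA_Zh^{-1}$ with $h\in A_Y$, $Z\subseteq Y$), and you rightly identify the main clause as the technical core that must be, and is, supplied by the cited work rather than re-proved here.
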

To define the last properties that we will require in this paper, we recall the following definitions due to Godelle \cite[Section~1]{GODELLE200739}.

For an Artin group $A_S$, we define the category $\operatorname{Conj}(S)$ as follows. We set the objects of $\operatorname{Conj}(S)$ to be all subsets of $S$, and set the morphisms between $X$ and $Y\subseteq V(S)$ to be in bijection with the elements of $G$ such that $gXg^{-1}=Y$ (note here that we are interested in conjugating the sets of generators to each other rather than simply the subgroups they generate). We denote the set of morphisms $X$ to $Y$ in $\operatorname{Conj}(S)$ by $\operatorname{Conj}(S; X, Y)$.

Consider now the monoid $A_S^+$ of positive words in $A_S$, which as a monoid has the same presentation as $A_S$. We may partially order the elements of $A_S^+$ by left division, so for $a, b\in A_S^+$ we write that $a\preceq b$ if there exists $c\in A_S^+$ such that $ac=b$.

\begin{lemma}\cite[Theorem~5.6]{BrieskornSaito}
    Let $A_S$ be an Artin group. Then the set $S$ has a least common multiple with respect to $\preceq$ in $A_S^+$ if and only if $A_S$ is spherical. In such a case, we denote this least common multiple by $\Delta_S$.
\end{lemma}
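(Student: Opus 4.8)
The plan is to translate the question about the monoid $A_S^+$ into a question about the Coxeter group $C_S$. The main input is the divisibility theory of Artin monoids: that $A_S^+$ is cancellative, that the word-length map $\ell_S\colon A_S^+\to\mathbb{Z}_{\ge 0}$ (well defined since every Artin relation preserves length) makes $\preceq$ a genuine partial order, and, crucially, that any two elements admitting a common right multiple admit a least one. I would take these as a technical black box; establishing them is the substance of Brieskorn and Saito's work and is the hard part of the whole argument. Alongside this I would use the theorem of Tits and Matsumoto that any two reduced words for an element $w$ of $C_S$ are related by braid relations alone, so that lifting a reduced word yields a well-defined section $\tau\colon C_S\to A_S^+$ whose image is exactly the set of \emph{square-free} positive elements, with $\pi\circ\tau=\mathrm{id}$ for the canonical projection $\pi\colon A_S^+\to C_S$.

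First I would record the dictionary between left divisibility of square-free elements and the weak order on $C_S$: for $s\in S$ and $w\in C_S$ one has $s\preceq\tau(w)$ if and only if $\ell(sw)<\ell(w)$, i.e. $w$ admits a reduced expression beginning with $s$. Next I would show that the square-free left divisors of any fixed $x\in A_S^+$ form a finite lattice whose join of two simples is again simple (this is where the conditional-lcm property and the sublattice property of square-free elements are used), so that $x$ has a unique maximal square-free left divisor $\alpha(x)$, through which every square-free left divisor of $x$ factors.

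With these tools both directions become short. Suppose $\Delta_S=\operatorname{lcm}(S)$ exists. Each generator $s$ is a square-free left divisor of $\Delta_S$, hence $s\preceq\alpha(\Delta_S)$, so $\alpha(\Delta_S)$ is itself a common multiple of $S$; since $\alpha(\Delta_S)\preceq\Delta_S$ while $\Delta_S$ is least, antisymmetry forces $\Delta_S=\alpha(\Delta_S)$ to be square-free. Writing $\Delta_S=\tau(w_0)$, the relation $s\preceq\tau(w_0)$ for every $s$ says that every generator lies in the left descent set of $w_0$; an element of a Coxeter group with full left descent set is the necessarily unique longest element, which exists precisely when $C_S$ is finite, i.e. when $A_S$ is spherical. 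Conversely, if $A_S$ is spherical then $C_S$ has a longest element $w_0$, and I claim $\tau(w_0)$ is the desired lcm: it is a common multiple because every $s$ is a left descent of $w_0$, and it is least because any common multiple $x$ satisfies $s\preceq\alpha(x)=\tau(v)$ for all $s$, forcing $v=w_0$ by uniqueness of the element with full descent set, whence $\tau(w_0)=\alpha(x)\preceq x$.

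I expect the genuine obstacle to lie entirely in the black box: proving cancellativity of $A_S^+$ together with the conditional existence of least common multiples, from which the sublattice property of square-free elements and the well-definedness of $\alpha$ follow. Everything after that is bookkeeping with the weak order and the longest-element criterion for finiteness of a Coxeter group. If one did not wish to invoke Brieskorn--Saito wholesale, cancellativity could be approached through their common-multiple induction on length, but I would not reprove it here.
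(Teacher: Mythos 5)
This statement is not proved in the paper at all: it is quoted verbatim as an external result, with the citation to Brieskorn--Saito (Theorem~5.6) serving as the entire justification. Your sketch is therefore doing strictly more than the paper does, and it is a correct reconstruction of the classical argument, essentially the one in the cited source: reduce the lcm question to Coxeter combinatorics via the bijection between square-free positive elements and elements of $C_S$ (Tits/Matsumoto), use the maximal square-free divisor $\alpha(x)$ to show any lcm of $S$ must be square-free, and then invoke the standard fact that an element of a Coxeter group with full left descent set exists if and only if the group is finite, in which case it is the unique longest element $w_0$, whose lift $\tau(w_0)$ is the desired $\Delta_S$. Your black box (cancellativity of $A_S^+$ and existence of conditional lcms) is legitimate and non-circular, since in Brieskorn--Saito's development those results (their \S2--\S4) precede the theorem being proved here (\S5); the same goes for the sublattice property of square-free elements that underlies $\alpha$. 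The one point worth flagging is that you are right to locate essentially all of the difficulty inside the black box --- granting it, your two directions are complete and correct, so as a derivation of the stated lemma from the foundational structure theory of Artin monoids, the proposal stands.
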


\begin{example}
    If $A_S$ is a dihedral Artin group with two generators $u$ and $v$ and a single relation of length $m\in\mathbb{N}$, then $A_S$ will be spherical as the corresponding Coxeter group will be the finite dihedral group $D_{2m}$. In this case, $\Delta_S=\underbrace{uvu\cdots uv}_{m}=\underbrace{vuv\cdots vu}_{m}$
\end{example}

Now let $A_S$ be an Artin group and $X, Y\subseteq S$. We say that an element $g$ of $A_S$ is an \emph{elementary $(X, Y)$-ribbon} if $gXg^{-1}=Y$ and there exists $t\in S\setminus X$ such that:
\begin{enumerate}[(R1)]
\item The vertex set of the connected component $U$ of $\overline{\Gamma}_S(X\cup\{t\})$ containing $t$ generates a spherical subgroup of $A_S$; and
\item We have that $g=\Delta_{U}^{-1}\Delta_{U\setminus\{t\}}$.
\end{enumerate}

We define the category $\operatorname{Ribb}(S)$ to be the smallest subcategory of $\operatorname{Conj}(S)$ containing the same set of objects and all morphisms that correspond to elementary ribbons. The set of morphisms in $\operatorname{Ribb}(S)$ therefore correspond to finite compositions of morphisms coresponding to elementary ribbons. We denote the set of morphisms $X$ to $Y$ in $\operatorname{Ribb}(S)$ by $\operatorname{Ribb}(S; X, Y)$, and call an element of $A_S$ an $(X, Y)$-ribbon if it corresponds to an element of $\operatorname{Ribb}(S; X, Y)$.

We will refer to the set of elements of $A_S$ that correspond to elements of $\operatorname{Conj}(S; X, Y)$ and $\operatorname{Ribb}(S; X, Y)$ simply as $\operatorname{Conj}(S; X, Y)$ and $\operatorname{Ribb}(S; X, Y)$ respectively.

\begin{definition} \cite[Definition~4.1]{GODELLE200739}\label{def:ribbon}
    We say that an Artin group $A_S$ has the \emph{ribbon property (RP)} if for all $X, Y\subseteq S$ and $g\in A_S$ we have that $g A_Xg^{-1}\subseteq A_Y$ if and only if $X_{as}\subset Y$ and $g\in A_Y\cdot \operatorname{Ribb}(X^{\perp}_{as}; X, R)$ for some $R\subset Y$.
\end{definition}

%
%
%
It is again conjectured that all Artin group have property RP \cite[Conjecture~4.2]{GODELLE200739}, and it is known for many Artin groups, for example for $2$-dimensional Artin groups \cite[Corollary~4.12]{GODELLE200739} and Artin groups of FC-type \cite[Theorem~3.2]{GodelleFC}. In particular, every large-type Artin group is (2, 2)-free and $2$-dimensional, so we have the following lemma.
\begin{lemma}\cite{Blufstein, GODELLE200739}
    Every large-type Artin group has properties PIP and RP.
\end{lemma}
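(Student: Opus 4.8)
The plan is to prove this as a pure combination result: I would verify that the hypotheses of the two cited theorems are met, namely that a large-type Artin group is simultaneously $(2,2)$-free and $2$-dimensional, and then invoke \cite[Theorem~1.3]{Blufstein} to obtain PIP and \cite[Corollary~4.12]{GODELLE200739} to obtain RP. Thus the real content is the two reductions, both of which follow from the defining condition that every edge label of $\Gamma_S$ is at least $3$.

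The $(2,2)$-free condition is immediate: since a large-type presentation graph has no edge labelled $2$ at all, every vertex is incident to zero --- in particular at most one --- edges labelled $2$. For $2$-dimensionality I would take an arbitrary $X \subseteq S$ with $A_X$ spherical and show $|X| \leq 2$. Recall that a spherical Artin group has a complete presentation graph and that any special subgroup of a spherical Artin group is again spherical (its Coxeter group is a standard parabolic subgroup of a finite Coxeter group, hence finite). If $|X| \geq 3$ then $\Gamma_X$ is complete and so contains a triangle on three vertices $\{a,b,c\}$; since $A_X$ is large type the three labels $m_{ab}, m_{bc}, m_{ca}$ are all at least $3$, and the special subgroup $A_{\{a,b,c\}}$ is spherical. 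But its Coxeter group is the triangle group with $\tfrac{1}{m_{ab}} + \tfrac{1}{m_{bc}} + \tfrac{1}{m_{ca}} \leq 1$, which is infinite, contradicting sphericity. Hence every spherical special subgroup has at most two generators, so $A_S$ is $2$-dimensional.

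With both conditions established, \cite[Theorem~1.3]{Blufstein} (applicable since $A_S$ is $2$-dimensional and $(2,2)$-free) gives PIP, and \cite[Corollary~4.12]{GODELLE200739} (applicable since $A_S$ is $2$-dimensional) gives RP, completing the proof. The only step carrying genuine mathematical content is the $2$-dimensionality reduction, and its crux is the classical fact that a Coxeter triangle group with all three labels at least $3$ satisfies $\tfrac1p+\tfrac1q+\tfrac1r\le 1$ and is therefore infinite (Euclidean when equality holds and hyperbolic otherwise); I expect this to be the main, though entirely standard, obstacle, since everything else is either definitional or a direct appeal to the cited theorems.
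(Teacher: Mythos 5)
Your proposal is correct and follows exactly the paper's route: the paper likewise deduces the lemma by observing that large-type implies $(2,2)$-free and $2$-dimensional and then citing \cite[Theorem~1.3]{Blufstein} for PIP and \cite[Corollary~4.12]{GODELLE200739} for RP. The only difference is that you spell out the (standard) verification of $2$-dimensionality via the infinite $(p,q,r)$-triangle Coxeter groups, which the paper treats as known.
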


Finally, we define the most important construction in this paper, the visual splitting. 

\begin{definition}
    Let $A_S$ be an Artin group with presentation graph $\Gamma_S$. An amalgam decomposition on $A_S$ is a \emph{visual splitting} if there exist $X, Y\subset S$ such that $X\cup Y=S$ and such that our amalgam decomposition is of the form $A_S=A_X*_{A_{X\cap Y}}A_Y$.

    For a given $X$ and $Y$ such a splitting exists if and only if $X\cap Y$ separates $\Gamma_S$.
\end{definition}

\section{Proofs of Main Theorem and Corollaries}
As the strong conditions on the Artin groups in question are only required in one direction for Theorem~\ref{Thm} we prove each direction separately with the following lemmas.

\begin{lemma}\label{lem:onlyif}
    Let $A_X*_{A_Z}A_Y$ be a non-trivial visual splitting of an Artin group $A_S$ with presentation graph $\Gamma_S$, and assume that there exists a path in $\Gamma_S$ between $N_\Gamma(X\setminus Z)$ and $N_\Gamma(Y\setminus Z)$. Then the splitting $A_X*_{A_Z}A_Y$ is not acylindrical.
\end{lemma}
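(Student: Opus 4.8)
The plan is to violate Definition~\ref{Def:kc} directly, by exhibiting a single generator $z^\ast\in Z$ whose infinite cyclic subgroup $\langle z^\ast\rangle\cong\Z$ fixes an entire geodesic ray of the Bass--Serre tree $T(\Gamma,\mathfrak G)$ pointwise. Since every initial segment of such a ray then has infinite pointwise stabiliser, no pair $(k,C)$ can witness acylindricity. The pointwise stabiliser of any path through the base edge $e_0=A_Z$ lies in $A_Z$, so it suffices to produce $g\in A_Z\setminus\{1\}$ together with $\alpha\in A_Y\setminus A_Z$ and $\beta\in A_X\setminus A_Z$ with $[g,\alpha]=[g,\beta]=1$: alternating these commuting elements yields the reduced sequence $w_0=1,\ w_1=\alpha,\ w_2=\alpha\beta,\ w_3=\alpha\beta\alpha,\dots$, so the edges $w_kA_Z$ trace a genuine geodesic ray in $T$, and $w_k^{-1}gw_k=g\in A_Z$ shows $g$ fixes each of them.

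The engine for producing commuting elements is a local observation I would isolate first. If $z\in Z$ is adjacent in $\Gamma_S$ to a vertex $x\in X\setminus Z$, then the dihedral special subgroup $A_{\{z,x\}}$ is spherical, hence carries a nontrivial central Garside element $\delta$ (a power of $\Delta_{\{z,x\}}$). This $\delta$ commutes with $z$, lies in $A_X$, and by Theorem~\ref{Van der Lek} one has $A_{\{z,x\}}\cap A_Z=\langle z\rangle$, so $\delta\in A_X\setminus A_Z$; thus \emph{any} $z\in Z$ neighbouring $X\setminus Z$ satisfies $C_{A_X}(z)\not\subseteq A_Z$, and symmetrically on the $Y$ side. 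The second ingredient is propagation of this property through $Z$: if $z,z'\in Z$ are joined by an odd-labelled path inside the induced graph $\Gamma_Z$, then by Theorem~\ref{thm:oddpathconj} applied to $A_Z$ they are conjugate by some $c\in A_Z$; since $c\in A_Z\subseteq A_X\cap A_Y$ normalises each of $A_X$, $A_Y$ and $A_Z$, conjugation by $c$ carries $C_{A_X}(z)\not\subseteq A_Z$ to $C_{A_X}(z')\not\subseteq A_Z$, and likewise for $Y$. (It is here, and only here, that the odd labels of the connecting path are used, exactly as in the hypothesis of Theorem~\ref{Thm}.)

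It remains to locate one vertex $z^\ast\in Z$ on the connecting path that neighbours $Y\setminus Z$ and is joined through $\Gamma_Z$ to a vertex neighbouring $X\setminus Z$. Because $Z$ separates $\Gamma_S$, no edge joins $X\setminus Z$ to $Y\setminus Z$, so the path decomposes into excursions into $X\setminus Z$ and into $Y\setminus Z$, separated by runs of vertices in $Z$; I would take $z^\ast$ to be the $Z$-vertex at which the path last leaves the $X$-side and first enters the $Y$-side. Then $z^\ast$ is joined by an edge to a vertex of $Y\setminus Z$, giving $C_{A_Y}(z^\ast)\not\subseteq A_Z$, while the preceding $Z$-run joins $z^\ast$ inside $\Gamma_Z$ to a vertex of $Z$ adjacent to $X\setminus Z$, giving $C_{A_X}(z^\ast)\not\subseteq A_Z$ after propagation. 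Setting $g=z^\ast$ and letting $\alpha,\beta$ be the corresponding (conjugated) Garside elements completes the construction. The step I expect to require the most care is this bookkeeping of how the path crosses $Z$ — in particular the degenerate cases where the path begins or ends inside $Z$, and the need to route from $z^\ast$ back to an $X$-neighbour entirely through $Z$-vertices — together with the repeated, though routine, use of the fact that conjugation by $A_Z$ preserves $A_Z$ setwise, which is what keeps every commuting element strictly outside the edge group and hence keeps the ray moving.
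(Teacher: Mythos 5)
Your proposal is correct and takes essentially the same route as the paper's proof: both use the centre of a spherical dihedral subgroup $A_{\{z,x\}}$ to produce an element of $A_X\setminus A_Z$ (resp.\ $A_Y\setminus A_Z$) commuting with a vertex of $Z$, both pass to an odd-labelled subpath lying entirely in $Z$ and apply Theorem~\ref{thm:oddpathconj} to conjugate within $A_Z$ and transport one side's centralising element to the other, and both conclude that the infinite cyclic group generated by a vertex of $Z$ fixes an unbounded subset of the tree, contradicting Definition~\ref{Def:kc}. The only cosmetic difference is the final packaging: the paper notes that the product of your $\alpha$ and $\beta$ is loxodromic and commutes with the generator, so the generator fixes its axis, while you construct that fixed ray explicitly via the alternating cosets $w_kA_Z$ --- the same mechanism made by hand.
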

\begin{proof}
    Let $x\in X\setminus Z$ and $y\in Y\setminus Z$ such that there exist neighbours $x'$ of $x$ and $y'$ of $y$ with a path $P=(p_0=x',p_1,...,p_n=y')$ in $\Gamma_S$ from $x$ to $y$ with odd labels. Using that $Z$ separates $\Gamma_S$ by definition of a visual splitting, may assume possibly by passing to a subpath that $p_i\in Z$ for all $0\leq i\leq n$.

    We claim that the centraliser of $x'$ contains elements of both $A_X\setminus A_Z$ and $A_Y\setminus A_Z$. Indeed, $A_{\{x, x'\}}$ is a dihedral Artin group, so is either has a non-trivial centre generated by a single element \cite[Theorem~7.2]{BrieskornSaito} which we call $z_{x, x'}$, or is isomrprphic to $\Z^2$, in which case we choose $z_{x, x'}=x$. In both cases, $z_{x, x'}$ lies in $A_X\setminus A_Z$ and centralises $x'$, and there similarly exists $z_{y, y'}\in A_Y\setminus A_Z$ that centralises $y'$.

    By choice of $y'$ and $x'$ they are connected by an odd path in $\Gamma_S$, so by Theorem~\ref{thm:oddpathconj} they are conjugate in $A_Z$, so there exists some element $g\in A_Z$ such that $gx'g^{-1}=y'$. Thus $g^{-1}z_{y, y'}g$ is an element of $A_y\setminus A_Z$ that centralises $x'$, and the claim is proven. 
    
    The element $g^{-1}z_{y, y'}gz_{x, x'}$ then acts loxodromically on the Bass-Serre tree $T$ associated to the visual splitting $A_X*_{A_Z}A_Y$ of $A_S$, but centralises $x'$, an elliptic element, implying by $\langle x'\rangle$ fixes an unbounded set in $T$. However, Theorem~\ref{Van der Lek} tells us that $\langle x'\rangle\cong\Z$ which is infinite, and so it follows that the action of $A_S$ on $T$ cannot be acylindrical and the result follows.
\end{proof}

The following lemma will be used to control the stabilisers of paths of length two in visual splittings of well-behaved Artin groups.
\begin{lemma}\label{lem:Stabs}
    Let $A_S$ be an Artin group with presentation graph $\Gamma_S$ and properties PIP and RP, and let $X\subseteq S$ and let $Z\subseteq X$. Let $g\in A_X\setminus A_Z$. Then there exists $Z_1\subseteq Z$ such that $A_Z\cap gA_Zg^{-1}$ is conjugate in $A_Z$ to the special subgroup $A_{Z_1}$ and such that all elements $z\in Z_1$ are connected to some element of $N_{\Gamma_X}(X\setminus Z)$ by a path with odd labels. 
    
    Moreover, if $Z'\subseteq X$ contains some vertex not connected to $N_{\Gamma_X}(X\backslash Z)$ by a path with odd labels then $A_{Z'}$ is not conjugate in $A_S$ into $A_Z\cap gA_Zg^{-1}$.
\end{lemma}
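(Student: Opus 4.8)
The plan is to analyze the intersection $A_Z \cap g A_Z g^{-1}$ directly using the ribbon and parabolic intersection machinery assembled in the preliminaries. Since $A_S$ has property PIP, Theorem~\ref{standard} immediately gives us $Z_1 \subseteq Z$ and $h \in A_Z$ with $A_Z \cap g A_Z g^{-1} = h A_{Z_1} h^{-1}$, so the first assertion reduces to controlling \emph{which} generators can appear in $Z_1$. The heart of the matter is therefore to show that every $z \in Z_1$ is connected by an odd-labelled path to $N_{\Gamma_X}(X \setminus Z)$. First I would fix a generator $z \in Z_1$; by construction $z$ is conjugate (in $A_Z$) into the intersection, so $h z h^{-1}$ lies in both $A_Z$ and $g A_Z g^{-1}$, meaning $g^{-1} h z h^{-1} g \in A_Z$. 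Writing $k = g^{-1} h$, we have a single generator $z$ with $k z k^{-1} \in A_Z$, and the key point is that $g \in A_X \setminus A_Z$ forces $k$ to genuinely move $z$ out of the ``obvious'' position, which is where the connectivity to $N_{\Gamma_X}(X\setminus Z)$ must come from.

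The engine for extracting the odd path is the ribbon property (Definition~\ref{def:ribbon}) together with Godelle's structure of ribbons as compositions of elementary ribbons. Each elementary $(W, W')$-ribbon is built from an element $t \in S \setminus W$ such that $W \cup \{t\}$ spans a spherical subgroup and has the form $\Delta_U^{-1}\Delta_{U\setminus\{t\}}$; conjugation by such a ribbon sends the generator set $W$ to $W'$, and tracking how an individual generator is relabelled along a single elementary ribbon produces an \emph{edge} of $\Gamma_X$ with an odd label (this is the standard fact that ribbon moves across an edge $\{s, t\}$ with $m_{st}$ odd swap the two generators, while even edges fix conjugacy classes, consistent with Theorem~\ref{thm:oddpathconj}). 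Chaining the elementary ribbons that express the relevant conjugator yields a path in $\Gamma_X$ all of whose labels are odd, connecting $z$ to a generator that, because $g \notin A_Z$, must lie in $N_{\Gamma_X}(X \setminus Z)$. I would make this last step precise by arguing that a conjugator witnessing membership in $g A_Z g^{-1}$ with $g \notin A_Z$ cannot be supported entirely within $Z$, so the ribbon decomposition must use at least one generator of $X \setminus Z$, and the first such generator encountered sits in the neighbourhood $N_{\Gamma_X}(X \setminus Z)$.

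For the ``moreover'' clause I would argue by contradiction. Suppose $A_{Z'}$ is conjugate in $A_S$ into $A_Z \cap g A_Z g^{-1} = h A_{Z_1} h^{-1}$. Applying Theorem~\ref{standard} again, $A_{Z'}$ is then conjugate in $A_{Z_1}$ (equivalently, after absorbing $h$, in $A_Z$) to a special subgroup $A_{Z''}$ with $Z'' \subseteq Z_1$. Now invoke Theorem~\ref{thm:oddpathconj}: a generator $v \in Z'$ not joined to $N_{\Gamma_X}(X \setminus Z)$ by an odd path is carried by this conjugacy to some generator in $Z'' \subseteq Z_1$, and conjugacy of generators forces an odd-labelled path between $v$ and that image. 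But by the first part every element of $Z_1$ \emph{is} odd-path-connected to $N_{\Gamma_X}(X\setminus Z)$, and concatenating paths (oddness is preserved under concatenation of odd paths only with care---here I would use that Paris's criterion is about existence of \emph{an} odd path, and transitivity of the ``conjugate generator'' relation) would then connect $v$ to $N_{\Gamma_X}(X\setminus Z)$ by an odd path, contradicting the hypothesis on $v$.

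The main obstacle I anticipate is the second step: converting the abstract ribbon decomposition of the conjugator into an honest odd-labelled path in $\Gamma_X$ and, crucially, guaranteeing that this path \emph{reaches} $N_{\Gamma_X}(X \setminus Z)$ rather than staying inside $Z$. This requires unwinding Godelle's elementary-ribbon formula $\Delta_U^{-1}\Delta_{U \setminus \{t\}}$ carefully enough to see exactly which edges of $\Gamma_X$ are traversed and to verify that the hypothesis $g \in A_X \setminus A_Z$ genuinely forces a ribbon move involving a vertex outside $Z$ (and hence in the neighbourhood of $X \setminus Z$); one must also ensure the spherical components entering the ribbon property---the $X^\perp_{as}$ bookkeeping in Definition~\ref{def:ribbon}---are handled correctly, since it is there that the interaction between the PIP-controlled intersection and the RP-controlled conjugators could fail to line up cleanly.
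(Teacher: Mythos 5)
Your plan follows the same route as the paper's proof (PIP plus Theorem~\ref{standard} to write $A_Z\cap gA_Zg^{-1}=hA_{Z_1}h^{-1}$, then RP and tracking of generators along elementary ribbons to extract odd paths), but the decisive step fails as you state it. You claim that because the conjugator $k=g^{-1}h$ is not in $A_Z$, ``the ribbon decomposition must use at least one generator of $X\setminus Z$.'' This inference is invalid: RP is a property of the ambient group $A_S$, so the elementary ribbons in the decomposition may use vertices $t\in S\setminus X$; a decomposition can avoid $X\setminus Z$ entirely and still fail to be supported in $Z$, by wandering through $S\setminus X$. (The same error appears in your claim that each elementary ribbon move produces an edge ``of $\Gamma_X$'' --- it produces an edge of $\Gamma_S$.) Hence your dichotomy --- either the odd path reaches $N_{\Gamma_X}(X\setminus Z)$, or the conjugator is supported in $Z$ --- misses a case, and the contradiction with $g\notin A_Z$ is never reached. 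The paper closes exactly this hole: it sets $Y=(S\setminus X)\cup Z$ and argues by contradiction and induction that if no image of $z$ ever lies in $N_{\Gamma_X}(X\setminus Z)$, then every elementary ribbon in the chain uses some $t\notin X\setminus Z$ (here one uses that $t$ must be adjacent in $\Gamma_S$ to every vertex of the current generator set, by sphericity of the component $U$ and the structure of $\overline{\Gamma}_S$), so each elementary ribbon, and hence the whole ribbon, lies in $A_Y$; since the conjugator also lies in $A_X$, Theorem~\ref{Van der Lek} gives that it lies in $A_X\cap A_Y=A_Z$, forcing $g\in A_Z$, a contradiction. The auxiliary set $Y$ and the intersection $A_X\cap A_Y=A_Z$ are the missing ideas; you flag ``the path staying inside $Z$'' as the main obstacle, but the real danger is the path escaping into $S\setminus X$.

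There is a second gap in your ``moreover'' clause. From Theorem~\ref{standard} you obtain that $A_{Z'}$ is conjugate to some $A_{Z''}$ with $Z''\subseteq Z_1$, and you then assert that a generator $v\in Z'$ ``is carried by this conjugacy to some generator in $Z''$.'' Theorem~\ref{standard} does not give this: a conjugation identifying two special subgroups need not send generators to generators. The paper instead invokes RP again --- the conjugator factors as an element of the target special subgroup times a ribbon, and ribbons by definition carry the generating set $Z'$ to a subset of $Z_1$ --- and only then applies Theorem~\ref{thm:oddpathconj} together with the first part of the lemma. (Your version could be patched without RP: $\langle v\rangle$ is conjugated into $A_{Z_1}$, so by Theorem~\ref{standard} it is conjugate to $A_W$ with $W\subseteq Z_1$, and $A_W\cong\Z$ forces $W=\{w\}$; then $v$ is conjugate to $w^{\pm 1}$, and an abelianization argument rules out $w^{-1}$. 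But some such argument must be supplied; as written the step is unjustified.)
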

\begin{proof}
    The subgroup $A_Z\cap gA_Zg^{-1}$ is the intersection of two parabolic subgroups of $A_S$, so is itself a parabolic subgroup of $A_S$ by PIP. Furthermore, $A_Z\cap gA_Zg^{-1}\leq A_Z$, so by Theorem~\ref{standard} we have that there exists $Z_1\subseteq Z$ and $h_1\in A_Z$ such that $A_Z\cap gA_Zg^{-1}= h_1A_{Z_1}h_1^{-1}$. Similarly, $g^{-1}A_Zg\cap A_Z\leq A_Z$, so there exists $Z_2\subseteq Z$ and $h_2\in A_Z$ such that $gA_Zg^{-1}\cap A_Z= h_2A_{Z_2}h_2^{-1}$. Therefore we have that 
    \begin{align*}h_1A_{Z_1}h_1^{-1}&=A_Z\cap gA_Zg^{-1}\\&=g\left(g^{-1}A_Zg\cap A_Z\right)g^{-1}\\&=g^{-1}h_2A_{Z_2}h_2^{-1}g,\end{align*}
    and so $A_{Z_2}=h_2^{-1}gh_1A_{Z_1}h_1^{-1}g^{-1}h_2$.

    Now let $Y = \left(S\setminus X\right)\cup Z$. The group $A_S$ has RP, so by Definition~\ref{def:ribbon} there exists $R\subset Z_2$ and $h_3\in A_{Z_2}$ such that \[h_3^{-1}h_2^{-1}gh_1\in \operatorname{Ribb}((Z_1)^\perp_{as}; Z_1, R)\cap A_X\subseteq \operatorname{Ribb}(S; Z_1, R)\cap A_X. \] Assume for contradiction that there exists some element $z\in Z_1$ that is not connected to any element of $N_{\Gamma_X}(X\setminus Z)$ by a path in $Z$ with odd labels. We will show by induction that all ribbons in $A_S$ originating from $Z_1$ are contained in $A_Y$, and so $h_2^{-1}gh_1h_3^{-1}\in A_Y\cap A_X=A_Z$.

    For the base case, let $T_1\subseteq Z$ and let $r\in \operatorname{Ribb}(S;, Z_1, T_1)$ be an elementary ribbon conjugating $Z_1$ to $T_1$. Then there exists $t\in S$ such that the connected component $U$ of $\overline{\Gamma}_{Z_1\cup\{t\}}$ corresponds to a spherical subgroup of $A_S$ and $r=\Delta_U^{-1}\Delta_{U\setminus \{t\}}$. In particular, $t$ must be a neighbour to each element of $Z_1$ in $\Gamma_S$. It follows that $t\notin X\setminus Z$ as $z$ is not in the neighbourhood of $X\setminus Z$ by assumption, and so $r\in A_Y$ as required. Now consider the image $z_1$ of $z$ under conjugation by $r$, which will be an element of $Y$ by the fact that $r, z\in A_Y$. By Theorem~\ref{thm:oddpathconj}, there must be a path in $\Gamma_S$ with odd labels between $z$ and $z_1$. It follows that there is no odd labelled path in $\Gamma_S$ connected $z_1$ to the neighbourhood of $N_{\Gamma_S}(X\setminus Z)$.

    Now assume for induction that every product $r$ of $i$ elementary ribbons originating from $Z_1$ is an element of $A_Y$, and that the conjugate of $Z_1$ by $r$ contains an element $z_i\in Z$ not connected to the neighbourhood of $N_{\Gamma_S}(X\setminus Z)$ by an odd path in $\Gamma_Z$. Then, as above, any elementary ribbon originating from $T_i=rZ_1r^{-1}$ will be contained in $A_Y$ and the image of $T_i$ under conjugation by an elementary ribbon will contain an element $z_{i+1}$ not connected to $N_{\Gamma_X}(X\setminus Z)$ by an odd path. It therefore follows by induction that any ribbon originating from $Z_1$ is contained within $A_Y$.

    We therefore have that $h_2^{-1}gh_1h_3^{-1}\in A_Y\cap A_X=A_Z$, but by construction $h_1, h_2$ and $h_3$ are in $A_Z$, and so $g$ must also be in $A_Z$. This contradicts the assumption that $g\notin A_Z$, and so it follows that every element of $Z_1$ is connected to the neighbourhood of $X\setminus Z$ by an odd path as claimed.

    Finally, assume that for some $Z'\subseteq Z$ the special subgroup $A_{Z'}$ is conjugate in $A_S$ into $A_{Z_1}$. Then $Z'$ is conjugate by ribbons in $A_S$ to some subset $Z'_1\subseteq Z_1$ by assumption that $A_S$ has property RP, and so by the same argument each vertex of $Z'$ must be connected to the neighbourhood of $X\setminus Z$ by an odd path in $\Gamma_Z$ as required.
\end{proof}

We are now ready to prove Theorem~\ref{Thm}, which we reformulate in the language developed above.
\begin{customthm}{\ref{Thm}}
Let $A_S$ be an Artin group with presentation graph $\Gamma_S$, and $X, Y\subseteq S$ such that $A_S=A_X*_{A_Z}A_Y$ is a non-trivial visual splitting. Assume further that there exist Artin groups $A_{X'}$ and $A_{Y'}$ with properties PIP and RP that contain $A_X$ and $A_Y$ respectively as special subgroups. Then $A_S=A_X*_{A_Z}A_Y$ is non-elementary acylindrical splitting if and only if the neighbourhoods $N_{\Gamma_X}(X\setminus Z)$ and $N_{\Gamma_Y}(Y\setminus Z)$ are not connected by an odd path in $\Gamma_S$.
\end{customthm}
\begin{proof}
    For the only if direction, let $A_X*_{A_Z}A_Y$ be a non-trivial visual splitting of $A_S$ such that $N_{\Gamma_X}(X\setminus Z)$ and $N_{\Gamma_Y}(Y\setminus Z)$ are connected by a path in $\Gamma_S$ with odd labels. It then follows immediately from Lemma~\ref{lem:onlyif} that this splitting is not acylindrical. We therefore proceed with the if direction. Let $A_S$ be the Artin group with presentation graph $\Gamma_S$, and let $A_X*_{A_Z}A_Y$ be a non-trivial visual splitting of $A_S$ such that $N_{\Gamma_X}(X\setminus Z)$ and $N_{\Gamma_Y}(Y\setminus Z)$ are not connected by a path in $\Gamma_S$ with odd labels. We will show that this splitting is $(3, 1)$-acylindrical.

    \begin{figure} 
        \centering
        \begin{tikzpicture}[->,shorten >=1pt,auto,node distance=3cm,
                thick]

            \node (1) at (-1, 0) {$A_X$};
            \node (2) at (1, 0) {$A_Y$};
            \node (3) at (-2.5, -1) {$g_1A_Y$};
            \node (4) at (2.5, 1) {$g_2A_X$};

            \path[-]
            (1) edge node {$A_Z$} (2)
                edge node {$g_1A_Z$} (3)
            (2) edge node {$g_2A_Z$} (4);
        \end{tikzpicture}
        \caption{\label{linefig}A generic 3-path in the Bass--Serre tree of the visual splitting $A_X*_{A_Z}\nolinebreak A_Y$ can be assumed to use $A_Z$ as its middle edge as the action is edge-transitive and by isometries.}
    \end{figure}
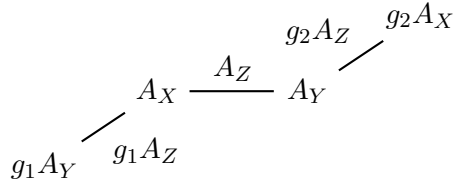 
    
    Consider a $3$-path $P$ in the Bass-Serre tree $T$ associated to this splitting. By edge transitivity of the action of $A_S$ on $T$ we may assume that the central edge of this path is labelled $A_Z$ as shown in Figure~\ref{linefig}, and that there exist $g_1\in A_X\setminus A_Z$, $g_2\in  A_Y\setminus A_Z$ such that the remaining two edges are labelled $g_1A_Z$ and $g_2A_Z$. The stabiliser of $P$ is therefore given by
    \begin{align*}
    \Ps_{A_S} (P)&=g_1A_Zg_1^{-1}\cap A_Z \cap g_2A_Zg_2^{-1}\\&=\left(g_1A_Zg_1^{-1}\cap A_Z\right)\cap\left(A_Z \cap g_2A_Zg_2^{-1}\right). 
    \end{align*}
    
    By Lemma~\ref{lem:Stabs} there exist $Z_1, Z_2\subseteq Z$ such that $g_1A_Zg_1^{-1}\cap A_Z$ is conjugate in $A_Z$ to $A_{Z_1}$ and $g_2A_Zg_2^{-1}\cap A_Z$ is conjugate in $A_Z$ to $A_{Z_2}$, and such that each vertex in $Z_1$ is connected by a path with odd labels in $Z$ to an element of $N_{\Gamma_X}(X\setminus Z)$ and each vertex in $Z_2$ is connected by a path with odd labels in $Z$ to an element of $N_{\Gamma_Y}(Y\setminus Z)$. There is no odd labelled path from $N_{\Gamma_X}(X\setminus Z)$ to $N_{\Gamma_Y}(Y\setminus Z)$ by assumption on $X$ and $Y$, so there can be no odd labelled path from any vertex of $Z_1$ to $N_{\Gamma_Y}(Y\setminus Z)$ or from any vertex of $Z_2$ to $N_{\Gamma_X}(X\setminus Z)$, so by the second part of Lemma~\ref{lem:Stabs} and Theorem~\ref{standard} their intersection must be trivial, and so this action $(3, 1)$-acylindrical as required. Finally, we observe that an Artin group on a presentation graph with at least two vertices can never by virtually cyclic, and so the fact that the given splitting is non-elementary acylindrical follows from \cite[Theorem~1.1]{Osin13}.
\end{proof}
Similarly, we prove Corollary~\ref{cor:main}, which we restate here for clarity.
\cormain*
\begin{proof}
First assume there exists two non-adjacent vertices $a$ and $b$ of $\Gamma_S$ whose neighbourhoods are not joined by path with odd labels. Then the visual splitting $A_S\cong A_{S\setminus\{a\}}*_{A_{S\setminus\{a, b\}}}A_{S\setminus\{b\}}$ satisfies the conditions of Theorem~\ref{Thm} with $A_{X'}=A_{Y'}=A_S$.

Now assume that there exists a non-trivial visual splitting $A_S\cong A_X*_{A_Z}A_Y$ of $A_S$ which is acylindrical. By non-triviality of the given splitting there exists $a\in X\setminus Z$ and $b\in Y\setminus Z$, and by Theorem~\ref{Thm} again with $A_{X'}=A_{Y'}=A_S$ there must be no odd path in $\Gamma_S$ between the neighbourhoods $N_{\Gamma_S}(X\setminus Z)$ and $N_{\Gamma_S}(Y\setminus Z)$ by acylindricity. It follows that the links of $a$ and $b$ in $\Gamma_S$ are not joined by a path in $\Gamma_S$ with odd labels as required.
\end{proof}

We finish this section with an example that demonstrates that, unfortunately, visual splittings do not paint a complete picture of the acylindrical arboreality of Artin groups.
\begin{example}\label{badEx}
    Let $A_S$ be the Artin group whose presentation graph $\Gamma_S$ is a copy of $P_3$ where one edge is labelled $2$ and the other is labelled $3$. Then $A_S$ has exactly one non-trivial visual splitting whose edge groups are both dihedral Artin groups which are spherical and thus satisfy the parabolic intersection property and the ribbon property. We may therefore apply Theorem~\ref{Thm} to see that this splitting is not acylindrical, and so $A_S$ has no acylindrical visual splitting. However, the tree of cylinders associated to this splitting by \cite{GuiradelLevitt11} is non-trivial, and will be acylindrical by \cite[Proposition~4.5]{JonesMangiorgioSartori}, for example. The group $A_S$ contains a copy of $\Z^2$ arising as the inclusion of the special subgroup on the edge labelled $2$, so in particular is not virtually cyclic and so by \cite[Theorem~1.1]{Osin13} the action of $A_S$ on is non-elementary and so $A_S$ is acylindrically arboreal.
\end{example}

\section{The Tits Alternative}
\label{sec:Tits}
    The main application we present here of our classification of visual splittings is to the Tits alternative for Artin groups. We recall the following definition.
    \begin{definition}
        Let $G$ be a group. We say that $G$ satisfies the \emph{strong Tits alternative} if for all subgroups $H\leq G$, $H$ is either virtually soluble or contains a non-abelian free group.
    \end{definition}
    The Tits alternative is a non-positive curvature property of sorts, and we may study it using acylindrical actions on hyperbolic spaces by classifying maximal elliptic subgroups (see \cite{MartinPrzytycki2dhyp}, for example). In an acylindrical action on a tree, these subgroups are simply vertex stabilisers, and as such we have the following well known result, of which we include a proof for completeness.
    \begin{lemma}
        Let $(\Gamma, \mathfrak{G})$ be an acylindrical graph of groups with fundamental group $G$. Then $G$ satisfies the strong Tits alternative if and only if the vertex group $G_v$ satisfies the strong Tits alternative for all $v\in V(\Gamma)$.
    \end{lemma}
    \begin{proof}
        First assume that for all $v\in V(\Gamma)$, $G_v$ satisfies the strong Tits alternative. Let $H\leq G$, and consider the action of $H$ on $T=T(\Gamma, \mathfrak{G})$, the Bass-Serre tree of the acylindrical splitting $(\Gamma, \mathfrak{G})$. By \cite[Theorem~1.1]{Osin13} we have three cases to consider.
        \begin{enumerate}
            \item The action of $H$ on $T$ is elliptic. Then there exists some vertex $v'\in T$ that $H$ stabilises, so there exists $v\in V(\Gamma)$ such that $H$ is conjugate in $G$ into $G_v$. Thus $H$ is isomorphic to a subgroup of a group satisfying the strong Tits alternative by assumption, and so $H$ is either virtually soluble or contains a non-abelian free subgroup as required.
            \item The action of $H$ on $T$ is \emph{lineal}, or fixes some line in $T$ setwise. In this case $H$ is virtually cyclic by \cite[Theorem~1.1]{Osin13}, and so $H$ is virtually soluble.
            \item The action of $H$ on $T$ is non-elementary, and $H$ is acylindrically arboreal. Then $H$ contains a non-elementary free subgroup \cite[Theorem~6.14]{DahmaniGuirardelOsin17}.
        \end{enumerate}
       Thus $G$ satisfies the strong Tits alternative. For the other direction, the strong Tits alternative is inherited by subgroups by definition, and so if $G$ satisfies the strong Tits alternative then so must every vertex group $G_v$ for $v\in V(\Gamma)$ as required.
    \end{proof}
    Applied to visual splittings of Artin groups this has the following immediate consequence.
    \begin{corollary}\label{cor:titsArtin}
        Let $A_S$ be an Artin group with an acylindrical visual splitting $A_X*_{A_Z}A_Y$. Then $A_S$ satisfies the strong Tits alternative if and only if $A_X$ and $A_Y$ satisfy the strong Tits alternative.
    \end{corollary}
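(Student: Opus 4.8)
The plan is to recognise that this corollary is nothing more than a specialisation of the preceding lemma to the one-edge graph of groups underlying a visual splitting, so the work is entirely in matching up the two pictures. First I would record that the amalgam $A_S = A_X *_{A_Z} A_Y$ is precisely the fundamental group of the graph of groups $(\Gamma, \mathfrak{G})$ whose underlying graph $\Gamma$ is a single edge $e$ joining two distinct vertices $u$ and $v$, carrying vertex groups $G_u = A_X$ and $G_v = A_Y$ and edge group $G_e = A_Z$, with the two edge monomorphisms given by the standard inclusions of the special subgroup $A_Z = A_{X \cap Y}$ into $A_X$ and $A_Y$ respectively (the identification $A_Z = A_X \cap A_Y$ being an instance of Theorem~\ref{Van der Lek}). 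The crucial point is that the full set of vertex groups of this decomposition is exactly $\{A_X, A_Y\}$.

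Next I would unwind the hypothesis of the corollary. To say that the visual splitting $A_X *_{A_Z} A_Y$ is \emph{acylindrical} means, by definition, that the action of $A_S$ on the associated Bass--Serre tree $T(\Gamma, \mathfrak{G})$ is acylindrical; but this is exactly the assertion that $(\Gamma, \mathfrak{G})$ is an acylindrical graph of groups in the sense required by the preceding lemma. With the two identifications in hand — the amalgam as $\pi_1$ of a one-edge graph of groups with vertex groups $A_X$ and $A_Y$, and the acylindricity hypothesis as acylindricity of that graph of groups — the corollary follows by direct appeal to the lemma: $A_S = \pi_1(\Gamma, \mathfrak{G})$ satisfies the strong Tits alternative if and only if $G_w$ does for every $w \in V(\Gamma)$, and since $V(\Gamma) = \{u, v\}$ with $G_u = A_X$ and $G_v = A_Y$, this is precisely the statement that both $A_X$ and $A_Y$ satisfy the strong Tits alternative.

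The hard part, such as it is, has already been discharged in the lemma: it is there that the acylindrical trichotomy of \cite{Osin13} confines an arbitrary subgroup $H \leq A_S$ to one of the three regimes (elliptic and hence conjugate into a vertex group, lineal and hence virtually cyclic, or non-elementary and hence free-containing by \cite{DahmaniGuirardelOsin17}). Consequently I expect no genuine obstacle in the proof of the corollary itself; the only thing to be careful about is the bookkeeping that the amalgamated free product really is modelled by the two-vertex, one-edge graph of groups whose vertex groups are the two factors, so that "for all $v \in V(\Gamma)$" collapses to "for $A_X$ and $A_Y$". Once that is noted, the statement is immediate.
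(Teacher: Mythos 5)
Your proposal is correct and matches the paper exactly: the paper offers no separate argument for this corollary, presenting it as an immediate consequence of the preceding lemma applied to the one-edge graph of groups $A_X *_{A_Z} A_Y$, with the vertex groups $A_X$, $A_Y$ and edge group $A_Z$, which is precisely the identification you spell out. Your only addition is making the bookkeeping explicit, which is harmless and arguably clarifying.
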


    Using Theorem~\ref{Thm}, this allows us to construct many and varied examples of Artin groups with the strong Tits alternative by combining previously known examples.
    \begin{example}\label{ex:Tits}
        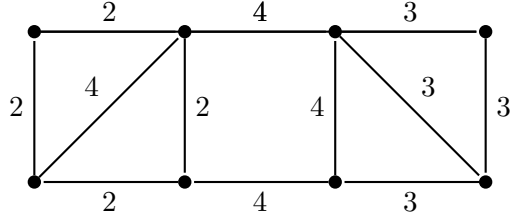
\begin{figure}
            \centering

            \begin{tikzpicture}[->,shorten >=1pt,auto,node distance=3cm,thick]

                \node[draw, circle, fill, inner sep=1.5pt] (a) at (0, 0) {};
                \node[draw, circle, fill, inner sep=1.5pt] (b) at (2, 0) {};
                \node[draw, circle, fill, inner sep=1.5pt] (c) at (4, 0) {};
                \node[draw, circle, fill, inner sep=1.5pt] (d) at (6, 0) {};
                \node[draw, circle, fill, inner sep=1.5pt] (e) at (0, 2) {};
                \node[draw, circle, fill, inner sep=1.5pt] (f) at (2, 2) {};
                \node[draw, circle, fill, inner sep=1.5pt] (g) at (4, 2) {};
                \node[draw, circle, fill, inner sep=1.5pt] (h) at (6, 2) {};
            
                \path[-]
                    (a) edge node {$2$} (e)
                        edge node {$4$} (f)
                    (b) edge node {$2$} (a)
                    (c) edge node {$4$} (b)
                        edge node {$4$} (g)
                    (d) edge node {$3$} (c)
                    (e) edge node {$2$} (f)
                        edge node {$4$} (h)
                    (f) edge node {$4$} (g)
                        edge node {$2$} (b)
                    (g) edge node {$3$} (h)
                        edge node {$3$} (d)
                    (h) edge node {$3$} (d);
          
            \end{tikzpicture}
            \caption{\label{weirdgph}The presentation graph $\Gamma_S$ of an Artin group that is neither $2$-dimensional, spherical or FC-type.}
        \end{figure}
        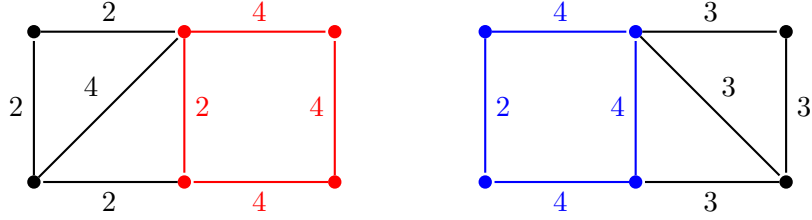
\begin{figure}
            \centering

            \begin{tikzpicture}[->,shorten >=1pt,auto,node distance=3cm,thick]

                \node[draw, circle, fill, inner sep=1.5pt] (a) at (0, 0) {};
                \node[draw, circle, fill, inner sep=1.5pt, red] (b) at (2, 0) {};
                \node[draw, circle, fill, inner sep=1.5pt, red] (c) at (4, 0) {};
                \node[draw, circle, fill, inner sep=1.5pt] (e) at (0, 2) {};
                \node[draw, circle, fill, inner sep=1.5pt, red] (f) at (2, 2) {};
                \node[draw, circle, fill, inner sep=1.5pt, red] (g) at (4, 2) {};

                \path[-]
                    (a) edge node {$2$} (e)
                        edge node {$4$} (f)
                    (b) edge node {$2$} (a)
                    (e) edge node {$2$} (f);
                \path[-, red]
                    (c) edge node [red] {$4$} (b)
                        edge node [red] {$4$} (g)
                    (f) edge node [red] {$4$} (g)
                        edge node [red] {$2$} (b);

                \node[draw, circle, fill, inner sep=1.5pt, blue] (b1) at (6, 0) {};
                \node[draw, circle, fill, inner sep=1.5pt, blue] (c1) at (8, 0) {};
                \node[draw, circle, fill, inner sep=1.5pt] (d) at (10, 0) {};
                \node[draw, circle, fill, inner sep=1.5pt, blue] (f1) at (6, 2) {};
                \node[draw, circle, fill, inner sep=1.5pt, blue] (g1) at (8, 2) {};
                \node[draw, circle, fill, inner sep=1.5pt] (h) at (10, 2) {};
            
                \path[-]
                    (d) edge node {$3$} (c1)
                    (g1) edge node {$3$} (h)
                        edge node {$3$} (d)
                    (h) edge node {$3$} (d);

                \path[-, blue]
                    (c1) edge node [blue] {$4$} (b1)
                        edge node [blue] {$4$} (g1)
                    (f1) edge node [blue] {$4$} (g1)
                        edge node [blue] {$2$} (b1);

            \end{tikzpicture}
            \caption{\label{amalgam}An acylindrical visual splitting of $A_S$ as $A_X*_{A_Z}A_Y$, where $\Gamma_X$ is shown on the left, $\Gamma_Y$ is shown on the right and $\Gamma_Z$ is shown as a red and blue subgraph in $\Gamma_X$ and $\Gamma_Y$ respectively.}
        \end{figure}
        Let $A_S$ be the Artin group with presentation graph $\Gamma_S$ as shown in Figure~\ref{weirdgph}. The group $A_S$ is not spherical as $\Gamma_S$ is not complete, not $2$-dimensional as $\Gamma_S$ contains a $(2, 2, 4)$-triangle which corresponds to a spherical subgroup on more than two generators, and is not of FC-type as $\Gamma_S$ contains a $(3, 3, 3)$-triangle which corresponds to a \emph{non}-spherical subgroup on a clique. The group $A_S$ is not known to cocompactly cubulate, and indeed if the conjectural classification of cocompact cubulability of Artin groups \cite[Conjecture~B]{Haettel} holds then $A_S$ will not cocompactly cubulate as the $(3, 3, 3)$ triangle falls into the first bullet point. Finally, the group $A_S$ is not $2$-dimensional as stated above, so is not known to act properly and cocompactly on a $2$-complex satisfying the conditions of \cite[Theorems~A and~A.2]{OSAJDA2021107976}. Thus $A_S$ cannot be shown to satisfy the strong Tits alternative using previously known constructions.
        
        However, $A_S$ has a visual splitting $A_X*_{A_Z}A_Y$ as shown in Figure~\ref{amalgam} in which $A_X$ is $2$-dimensional and $(2, 2)$-free, so will have PIP and RP by \cite[Theorem~1.3]{Blufstein} and \cite[Theorem~3]{GODELLE200739} respectively, and $A_Y$ is even of FC-type so will have properties PIP and RP by \cite[Theorem~1.1]{Antolín_Foniqi_2022} and \cite[Theorem~0.3]{GodelleFC} respectively. This splitting is therefore acylindrical by Theorem~\ref{Thm} using the fact that $N_{\Gamma_X}(X\setminus Z)$ and $N_{\Gamma_Y}Y\setminus Z)$ are disjoint and $\Gamma_Z$ contains no odd labelled edges. Furthermore, $A_X$ is $2$-dimensional so satisfies the strong Tits alternative by \cite[Theorem~A]{MARTIN2024294}, and $A_Y$ is again FC-type, so satisfies the strong Tits alternative by \cite[Theorem~B]{MartinPrzytyckiFC}. Therefore Corollary~\ref{cor:titsArtin} implies that $A_S$ satisfies the Tits alternative.
        
        This example is by no means unique - indeed, if one removes the two vertical edges in $\Gamma_Z$ then this splitting would still satisfy the required conditions, there is a lot of freedom in the choice of the labels in this example, and other unrelated examples are easy to construct.
    \end{example}

\printbibliography
\end{document}